\numberwithin{equation}{section}
\theoremstyle{plain}
\newtheorem{theorem}{Theorem}[section]
\newtheorem{proposition}[theorem]{Proposition}
\newtheorem{lemma}[theorem]{Lemma}
\newtheorem{corollary}[theorem]{Corollary}
\theoremstyle{definition}
\newtheorem{definition}[theorem]{Definition}
\theoremstyle{remark}
\newtheorem{remark}[theorem]{Remark}
\newcommand{\T}{\mathbb{T}}
\newcommand{\R}{\mathbb{R}}
\newcommand{\Z}{\mathbb{Z}}
\newcommand{\dT}{\mathsf{d}_{\T^d}}
\renewcommand{\d}{\mathrm{d}}
\newcommand{\pr}{\operatorname{pr}}
\newcommand{\norm}[1]{\left\lVert#1\right\rVert}
\newcommand{\bra}[1]{\left ( #1\right )}
\newcommand{\braq}[1]{\left [ #1\right ]}
\newcommand{\brag}[1]{\left \{ #1\right \}}
\renewcommand{\epsilon}{\varepsilon}
\renewcommand{\phi}{\varphi}
\newcommand{\abs}[1]{\left | #1\right |}
\newcommand{\Ex}{\mathbb{E}}
\DeclareMathOperator*{\Lip}{Lip}
\DeclareMathOperator*{\dist}{\mathsf{d}}
\DeclarePairedDelimiter\floor{\lfloor}{\rfloor}
\title[Wasserstein Asymptotics for fBm on a Flat Torus]{Wasserstein Asymptotics for the Empirical Measure of Fractional Brownian Motion on a Flat Torus}
\thanks{M.H. and F.M. are supported by the Deutsche Forschungsgemeinschaft (DFG, German Research Foundation) through the SPP 2265 {\it Random Geometric Systems}. M.H. and F.M. have been funded by the Deutsche Forschungsgemeinschaft (DFG, German Research Foundation) under Germany's Excellence Strategy EXC 2044 -390685587, Mathematics M\"unster: Dynamics--Geometry--Structure. D.T. is a member of the Gnampa INdAM group. }
\author[M. Huesmann]{Martin Huesmann}
\address{M.H.: Universit\"at M\"unster, Germany}
\email{martin.huesmann@uni-muenster.de}
\author[F. Mattesini]{Francesco Mattesini}
\address{F.M.: Universit\"at M\"unster \& MPI Leipzig, Germany   }
\email{francesco.mattesini@uni-muenster.de}
\author[D. Trevisan]{Dario Trevisan}
\address{D.T.: Dipartimento di Matematica, Università degli Studi di Pisa, 56125 Pisa, Italy  }
\email{dario.trevisan@unipi.it}
\date{}
\keywords{Fractional Brownian Motion, Optimal Transport, Empirical Measure}
\begin{document}

\begin{abstract} 
We establish asymptotic upper and lower bounds for the  Wasserstein distance of any order $p\ge 1$ between the empirical measure of a fractional Brownian motion on a flat torus and the uniform Lebesgue measure. Our inequalities reveal an interesting interaction between the Hurst index $H$ and the dimension $d$ of the state space, with a ``phase-transition'' in the rates when $d=2+1/H$, akin to the Ajtai-Koml\'os-Tusn\'ady theorem for the optimal matching of i.i.d.\ points in two-dimensions. Our proof couples PDE's and probabilistic techniques, and also yields a similar result for 
discrete-time approximations of the process, as well as a lower bound for the same problem on $\R^d$.
\end{abstract} 

\maketitle 

\section{Introduction}

%Markov processes (typically, Min tormsarkov chains) can be used as efficient tools in Monte Carlo methods to explore a space of 

%Stochastic processes, if suitably simulated, can be efficient tools to explore a given state space,  e.g., to solve via simulated annealing an optimization problem, or simply to perform a Monte Carlo integration.\footnote{Still have not found a good incipit.. we may need to discuss it} 

The occupation measure of a process $X = (X_t)_{t \ge 0}$ up to a time $T$ is defined as
$$ \mu^X_T = \sum_{0 \le t \le T} \delta_{X_t}, \quad \text{or} \quad \mu^X_T = \int_0^T \delta_{X_t} dt,$$
depending on whether $X$ is a discrete- or continuous-time process. If renormalized to a probability measure it is also known as the empirical measure of $X$. It is a fundamental object describing the distribution of the process trajectory on the state space since, given any measurable set $A$ in the state space of the process, one has
$$ \mu^X_T(A) = | \brag{ t \in [0,T] \, : X_t \in A }|$$
where $|\cdot|$ denotes the number of elements (in the discrete-time case) or Lebesgue measure (in the continuous-time case).  From the simplest case of a discrete-time process consisting of i.i.d.\ random variables, to that of dependent variables, in particular for Markov processes, the occupation measure has many applications, ranging from non-parametric statistics, to Monte Carlo integration and mean field theory. Under natural assumptions on $X$, such as stationarity and ergodicity, limit theorems can be established for the empirical measure as  $T \to \infty$. It is an interesting and often challenging question to determine how fast convergence occurs in terms of a given metric on the space of measures, thus complementing the qualitative convergence with useful quantitative bounds, that may reveal otherwise hidden features, e.g.\ the role of dimensionality of the state space, or regularity of the process trajectories.

\medskip
If the state space of the process $X$ is naturally endowed with a distance, a natural family of metrics between measures $\mu$, $\nu$ is provided by the so-called Wasserstein distances of order $p$,  $W_p(\mu, \nu)$, defined in terms of the  optimal transport problem with cost given by the $p$-th power of the distance (where $p\ge 1$ is a chosen parameter). Also called earth mover's distance, $W_p(\mu, \nu)$ is then the minimum total cost of turning $\mu$, thought as a distribution of mass, into $\nu$, by physically moving it in the ambient space. The case $p=1$ is the classical one, dating back to Monge and Kantorovich, and $W_1(\mu, \nu)$ has also a natural dual formulation in terms of Lipschiz functions:
$$ W_1 (\mu, \nu) = \sup _{\Lip(f) \le 1 }\brag{ \int f \,\d \mu - \int f \,\d \nu},$$
but in recent years other choices, in particular $p=2$, have been subject of intense investigations \cite{ambrosio2005gradient, villani2009optimal, santambrogio2015optimal}.

\medskip
The aim of this work is to establish precise results for the empirical measure of the fractional Brownian motion (fBm), which constitutes a fundamental example of a continuous time process that is not Markov (except when it coincides with usual Brownian motion).  From an applied perspective, fBm is widely used to model real-world phenomena, in particular those exhibiting long-range dependence, and has been used in various fields, from biology, to telecommunication engineering and finance \cite{decreusefond1998fractional, jumarie2006new, fallahgoul2016fractional}. Our primary motivation comes from the fact that the vast and growing literature on large-time asymptotic results for empirical measures of stochastic processes focuses mostly on the case of i.i.d.\ random variables or Markov processes, see e.g.\ \cite{fournier2015rate, riekert2021wasserstein} for Markov chains, and examples outside this class are quite rare, see e.g.\  \cite{jalowy2021wasserstein} for an interesting application to random matrix theory. Still, fBm is a Gaussian process hence it is amenable for many explicit computations, a fact that stimulated the development of novel approaches to stochastic analysis \cite{biagini_stochastic_2008, nourdin2012selected}.

\subsection*{Main result}
Our main theorem can be stated as follows:

\begin{theorem}\label{thm:main-Td}
Let $B^H = \bra{B_t^H}_{t \ge 0}$ be a fractional Brownian motion with Hurst index $H \in \bra{0,1}$ with values on a $d$-dimensional torus $\mathbb{T}^d$. Then, for every $p\ge 1$, as $T \rightarrow \infty$,
\begin{equation}\label{eq:rates-main-theorem}
 \Ex \braq{W_p^p \bra{ \frac 1 T \int_0^T \delta_{B^H_s} ds , \mathcal{L}^d_{\mathbb{T}^d}}} \sim  \footnote{We use the notation $A\lesssim B$ if there exists a global constant $C>0$, which may only depend on $d,p$ and $H$, such that $A \le C B$. We write $A \sim B$ if both $A \lesssim B$ and $B \lesssim A$.} \begin{cases}
T^{-p/2} & \text{if}\ d < \frac{1}{H} + 2, \\
\bra{\log T/T}^{p/2} & \text{if}\ d = \frac{1}{H} + 2, \\
T^{-\frac{p}{d-1/H} } & \text{if}\ d > \frac{1}{H} + 2,
\end{cases} 
\end{equation}
where $\mathcal{L}^d_{\mathbb{T}^d}$ denote the uniform probability (Lebesgue measure) on $\T^d$.
\end{theorem}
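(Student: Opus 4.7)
The plan is to combine the PDE/heat-smoothing framework developed by Ambrosio--Stra--Trevisan for AKT-type problems with Gaussian Fourier computations specific to fractional Brownian motion. Set $\mu_T = \frac{1}{T}\int_0^T \delta_{B^H_s}\,ds$ and, for a mollification scale $t > 0$ to be optimized, use the triangle inequality
\begin{equation*}
W_p\bra{\mu_T, \mathcal{L}^d_{\T^d}} \le W_p\bra{\mu_T, P_t \mu_T} + W_p\bra{P_t \mu_T, \mathcal{L}^d_{\T^d}},
\end{equation*}
where $P_t$ is the heat semigroup on $\T^d$. A Brownian bridge coupling bounds the first term by $\Ex W_p^p(\mu_T, P_t \mu_T) \lesssim t^{p/2}$ uniformly in $H$. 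The second term is handled via the Benamou--Brenier/$H^{-1}$ estimate $W_2^2(\rho \mathcal{L}^d_{\T^d}, \mathcal{L}^d_{\T^d}) \le \|\rho-1\|_{H^{-1}}^2$, combined with a density-dependent comparison between $W_p$ and $W_2$ (using $\|P_t\mu_T\|_\infty \lesssim t^{-d/2}$) that transfers the bound to general $p \ge 1$.

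The core computation is then the Fourier expansion
\begin{equation*}
\Ex\|P_t \mu_T - 1\|_{H^{-1}}^2 \sim \sum_{k \in \Z^d \setminus \{0\}} \frac{e^{-2t|k|^2}}{|k|^2}\,\Ex\abs{\widehat{\mu_T}(k)}^2,
\end{equation*}
and Gaussianity of $B^H$, with stationary increments of variance $|s-r|^{2H}$, yields
\begin{equation*}
\Ex\abs{\widehat{\mu_T}(k)}^2 = \frac{1}{T^2}\int_0^T\!\!\int_0^T e^{-2\pi^2 |k|^2 |s-r|^{2H}}\,ds\,dr \sim \frac{1}{T\,|k|^{1/H}}
\end{equation*}
once $T|k|^{1/H} \gg 1$. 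The resulting harmonic sum $\tfrac{1}{T}\sum_{k}|k|^{-(2+1/H)} e^{-2t|k|^2}$ converges, diverges as $\log(1/t)$, or diverges as $t^{-(d-2-1/H)/2}$ precisely when $d - 1/H$ is $<2$, $=2$, or $>2$. Balancing with the mollification error $t^{p/2}$ then produces the three rates in \eqref{eq:rates-main-theorem}, through the optimal scales $t \sim 1$, $t \sim \log T/T$, and $t \sim T^{-2/(d-1/H)}$ respectively.

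For the matching lower bound I would appeal to Kantorovich duality against pure Fourier modes: for each $k$ the test function $\phi_k(x) = (2\pi|k|)^{-1}\cos(2\pi k\cdot x)$ is $1$-Lipschitz, so
\begin{equation*}
W_1\bra{\mu_T, \mathcal{L}^d_{\T^d}} \ge \frac{1}{2\pi|k|}\abs{\Re \widehat{\mu_T}(k)}.
\end{equation*}
A Paley--Zygmund argument, fed by a fourth-moment computation for $\widehat{\mu_T}(k)$ (feasible via Isserlis' formula applied to the Gaussian $B^H$), upgrades the second-moment bound to $\Ex|\Re\widehat{\mu_T}(k)| \gtrsim (T|k|^{1/H})^{-1/2}$. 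Combining these one-frequency estimates across a dyadic band up to the critical cutoff $|k| \sim t^{-1/2}$, and using $W_p^p \ge W_1^p$, should yield the matching lower bound on $\Ex W_p^p$.

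The hardest step I anticipate is the lower bound. Turning pointwise Fourier discrepancies into a lower bound on $\Ex W_p^p$ (as opposed to $(\Ex W_1)^p$) requires either combining independent sign choices across dyadic scales, or a genuine multiscale Talagrand-type argument adapted to the occupation measure of $B^H$. Matching the logarithm at the critical dimension $d = 1/H + 2$ is also delicate, since both bounds must faithfully reflect the equal contribution of all frequency scales $1 \le |k| \le 1/\sqrt{t}$: this forces the same optimal choice $t \sim \log T / T$ on both sides, and the constants in the variance estimate must be sharp enough for the logarithmic factors on the two sides to align.
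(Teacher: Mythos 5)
Your upper bound for $p=1$ is essentially the paper's argument (heat-smoothing plus the $H^{-1}$ Fourier computation), and the second-moment identity $\Ex|\widehat{\mu_T}(k)|^2 \sim T^{-1}|k|^{-1/H}$ is correct. But there are two genuine gaps that block the proof for general $p$ and for the lower bound.

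\emph{Upper bound for $p>1$.} You propose to bound $W_p(P_t\mu_T,\mathcal{L}^d_{\T^d})$ by $W_2$ together with a density-dependent comparison exploiting $\|P_t\mu_T\|_\infty\lesssim t^{-d/2}$. No such comparison gives the sharp rate: the density bound $t^{-d/2}$ diverges exactly at the scales where the estimate must be applied, and the trivial interpolation $W_p^p\le\diam(\T^d)^{p-2}W_2^2$ produces $T^{-1}$ in the subcritical regime, which is far weaker than the target $T^{-p/2}$ when $p>2$. What is actually needed is a \emph{direct} $L^p$-to-$W_p$ bound (the paper invokes Ledoux's result $W_p(P_\epsilon\mu,1)\le p\|\nabla u_\epsilon\|_{L^p}$), and then an estimate of $\Ex[\|\nabla u_\epsilon\|_{L^p}^p]$. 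The latter is not a second-moment computation: expanding $\|\nabla u_\epsilon\|_{L^p}^p$ for even $p$ via Plancherel produces $p$-fold mixed moments $\Ex[\prod_j\widehat{\mu_T}(\xi_j)]$ on the hyperplane $\sum\xi_j=0$, and controlling these requires a covariance lower bound for the Gaussian increment vector (the paper uses the local non-determinism inequality \eqref{eq:local-non-determinism}) followed by a multi-factor Young-convolution estimate on $\Z^d$. This whole block is absent from your outline.

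\emph{Lower bound.} Testing against a single $1$-Lipschitz Fourier mode $\phi_k$ gives $\Ex W_1 \gtrsim |k|^{-1}(T|k|^{1/H})^{-1/2} = T^{-1/2}|k|^{-1-1/(2H)}$, which is maximized at $|k|=1$ and yields only $T^{-1/2}$. This is strictly weaker than the target rate as soon as $d\ge 2+1/H$, so the single-mode strategy fails precisely in the regimes where the lower bound is interesting. Moreover, since $W_1$ is a supremum over test functions, you cannot aggregate the contributions of different frequencies by summing: you need a \emph{single} Lipschitz function that simultaneously correlates with a macroscopic fraction of the Fourier discrepancy. The paper achieves this by taking an $M$-Lipschitz truncation $f^M$ of the potential $u_\epsilon$ itself, yielding $W_1 \gtrsim M^{-1}\|\nabla u_\epsilon\|_{L^2}^2 - CM^{-3}\|\nabla u_\epsilon\|_{L^4}^4$ and optimizing over $M$. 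This, too, feeds back into the need for a fourth-moment bound on $\nabla u_\epsilon$, hence again on the local non-determinism machinery. The ``multiscale Talagrand-type argument'' you mention as a fallback is not developed, and the Paley--Zygmund step, while fine for fixing a single mode, does not address the aggregation problem.
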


All the definitions (of fBm with Hurst index  $H$ and Wasserstein distance $W_p$) are precisely recalled in Section~\ref{sec:notation}. Informally, the Hurst index $H$ measures the regularity of the process trajectories (which are slightly less than $H$-H\"older continuous). A $d$-dimensional fBm is simply given by  $d$-independent copies of a one-dimensional fBm. We are currently not able to deal with $\R^d$ as a state space (because of lack of compactness), hence we technically restrict to a torus $\T^d = \R^d/\mathbb{Z}^d$ by projecting the process from $\R^d$. However, a straightforward consequence is an asymptotic lower bound in $\R^d$, Corollary \ref{cor:two-paths}, whose sharpness however is presently not clear to us.

\medskip
The asymptotic rates in \eqref{eq:rates-main-theorem} depend on the values of $d$ and $H$, with a ``phase transition'' at the critical dimension $1/H+2$: if the dimension $d$ is smaller, then the rate is akin to that of a central limit case (i.e., $\sim T^{-1/2}$ for $W_p$), while if $d$ is larger, then the rate is dimension dependent and actually suffers from a curse of dimensionality, since as $d \to \infty$ convergence is slower (keeping $p$ and $H$ fixed). This may suggest the use of small values of $H$ to explore with a fBm a large dimensional manifold, although it may be impractical from a computational point of view. The precise asymptotics, i.e. $\sim  T^{-1/\bra{d-1/H}}$ can be also intuitively interpreted with the following heuristics. Since the trajectories of fBm are (almost) $H$-H\"older continuous, then the support of the occupation measure will be roughly $1/H$-dimensional. One may then ask what is the optimal way to choose a measure with total mass $T$ supported on a $1/H$-dimensional set, to minimize its Wasserstein distance from $T \mathcal{L}^d_{\T^d}$ -- this is a generalization of the usual quantization problem \cite{graf2007foundations} where the measure must be supported on $0$-dimensional sets.  If $1/H = h$ is integer, the Hausdorff measure on the union of  $[0,1]^h \times \brag{z_i}$, with $z_i \in [0,1]^{d-h}$ in a grid with $\sim T$ points, so that $|z_i - z_j| \lesssim T^{-1/(d-h)}$, gives a feasible choice with Wasserstein distance $\lesssim T \cdot T^{-1/(d-h)}$ which coincides with the rate in \eqref{eq:rates-main-theorem} for $d>1/H$. We conjecture that, although the above construction does not provide a minimizer, the asymptotic rate for the minimization problem is indeed $ T \cdot T^{-1/(d-1/H)}$, hence the trajectories of fBm are efficient from the asymptotic perspective, although quite different from the construction above or possibly the actual minimizers.

\medskip
In the case $H=1/2$, so that fBm reduces to usual Brownian motion, our result is a special case of  those obtained by F.-Y.~Wang and collaborators \cite{wang2019limit, wang2021convergence, wang2021precise,wang2022wasserstein} for general diffusion processes on compact and even non-compact Riemannian manifolds. In the compact case, the critical dimension is always $4=2+1/H$, in line with our result.

\medskip
% Even more interestingly, the critical dimension $d=2+1/H$ informally corresponds also as $H \to \infty$, which is of course non-rigorous, since $H \in (0,1)$, but should lead to the case where the trajectories, whose dimension is roughly $1/H$, reduce to independent points, hence recovering the asymptotic behaviour of the empirical process of i.i.d.\ uniform points on $\T^d$, where the critical dimension equals $2$, as first established by Ajtai, Komlos and Tusnady \cite{ajtai1984optimal} for the matching problem in $[0,1]^2$ (but their argument would work also on $\T^2$). It would be fascinating to set this limit on more rigorous grounds. 
Even more interestingly, informally in the limit $H\to \infty$ (this is of course non rigorous since $H\in (0,1)$) the trajectories, whose dimension is $1/H$, reduce to independent points. Then, our result precisely recovers the asymptotic behaviour for $d>1$ and critical dimension $d=2$ of the empirical process of i.i.d.\ points on the torus $\T^2$ as first established by Ajtai, Koml\'os and Tusn\'ady \cite{ajtai1984optimal} for the matching problem in $[0,1]^2$ (but their argument would work also on $\T^2$). It would be fascinating to set this limit on more rigorous grounds.

\subsection*{Comments on proof technique}
The overall strategy is based on the recent PDE approach to the bipartite matching problem \cite{ambrosio2019pde}, later simplified on the torus $\T^d$ in \cite{bobkov2021simple}, which made rigorous some challenging predictions from the statistical physics literature \cite{caracciolo2014scaling}. Similar techniques, in particular related to upper bounds, have been also independently employed in the literature, to study empirical measures of random walks on groups \cite{borda2021berry, borda2021equidistribution, borda2021empirical},  Kronecker sequences on the torus \cite{steinerberger2021wasserstein} or with applications to minimal Green energy problems on compact manifolds \cite{steinerberger2021green}.

\medskip
The main idea is to replace $W_p(\mu, \nu)$ with a negative Sobolev norm $\norm{ \nabla \Delta^{-1} (\mu - \nu)}_{L^p}$, which is indeed quite close to it if both $\mu$ and $\nu$ have nice densities with respect to Lebesgue measure. While $\mu = \mathscr{L}^d$ gives no problem, for the empirical measure, which is singular, we introduce a smoothing step, using the heat semigroup with a small time parameter to be carefully tuned. Finally, we estimate the negative Sobolev norms using a Fourier expansion: this is where our approach becomes less straightforward, since we need to take into considerations the probabilistic correlations between the various coefficients, while in the classical matching between i.i.d.\ points these are almost independent. Although it is never made explicit, the underlying difference is that in the CLT scaling one has convergence in law (in the space of distributions)
$$ \frac{1}{\sqrt{T}} \bra{ \int_0^T \delta_{B_s^H} ds - T } \to \Xi$$
towards a coloured noise on $\T^d$, while in the i.i.d.\ case $\Xi$ is simply white noise (which would imply independent Fourier coefficients). The same problem would appear already in the Brownian case, $H=1/2$, but was overcome by Wang using the Markov property and bounds for the heat kernel. We overcome instead this difficulty by relying on  the \emph{local  non-determinism} of fBm \cite{xiao2006properties, galeati2020prevalence}, yielding a lower bound on the covariance matrix of the time increments of the process which in turn appears in the estimate for mixed moments of the Fourier coefficients.

\medskip
The overall approach that we employ is quite robust and can be used to establish other asymptotic bounds, e.g.\ when the continuous-time process is replaced with discrete-time approximations, e.g., with a time-step $\tau = T^{-\alpha}$, for some $\alpha>0$. This is made explicit in Theorem~\ref{thm:main-discrete} and interestingly gives a more complex picture, with rates depending also on $\alpha$. This may be relevant for applications, since numerical simulations of Brownian motion can be exactly performed on a finite grid in an interval $[0,T]$ (spectral methods \cite{dieker2003spectral} provide an alternative for which it would be interesting to establish similar results).

\subsection*{Open questions}

In view of our results, several open questions arise: 

\begin{enumerate}
\item The most relevant also for applications is whether the same bounds we have on $\T^d$ are valid on $\R^d$, i.e., complementing Corollary~\ref{cor:two-paths} with upper bounds. The closest problems are in the i.i.d.\ case, for matching of Gaussian samples, where sharp bounds are known if $p<d$ and for the critical $p=d=2$ where rates are different than the compact (torus or square) case \cite{ledoux2017optimal, ledoux2019optimal, talagrand2018scaling}; in the Markov diffusion case, Wang \cite{wang2022wasserstein} established (sharp) upper bounds for non-compact manifolds with a Gibbs measure $e^{-V}$ with a potential $V$ growing sufficiently fast, and here sharp rates, for $p=2$, are only known if the dimension is not too large (and in general $d\le 4$).

\item  A second question is whether the asymptotic upper and lower bounds in \eqref{eq:rates-main-theorem} can be made precise, showing existence of a limit, when appropriately renormalizing the expected Wasserstein distance. Again, comparison with the i.i.d.\ case (e.g.\ on $[0,1]^d$ \cite{goldman2020convergence}) and Markov diffusion cases (on compact manifolds) \cite{wang2019limit} suggest that the limit should always exist, although we should point out that in the i.i.d.\ case it is still an open question in the critical dimension $d=2$ and $p\neq 2$, while for Markov diffusions only the case $p=2$, $d <4$ is settled.

\item A relevant case in which our PDE techniques do not seem to apply (already for the matching problem) is that of $p = \infty$. This is also closely related to the cover time problem, i.e., the time it takes for the process to be uniformly close to any point in the state space, which was completely settled for Brownian motion in \cite{dembo2004cover}.

\item We finally point out that our results describe only the asymptotic behaviour of the Wasserstein distance in expectation, but stronger convergence, e.g.\ almost sure, should hold. For the i.i.d.\ case, this is almost without effort established using standard concentration of measure arguments, at least in the regime $p<d$. For Markov chains, i.e., discrete time, one may use Marton's arguments \cite{marton1996measure}, but already for diffusion processes concentration appears to be a challenging question, that we leave for future investigations.

\end{enumerate}

\subsection*{Structure of the paper} In Section~\ref{sec:notation} we introduce the notation and basic facts for fBm, Wasserstein distance and Fourier analysis on $\T^d$. Section~\ref{sec:thm:main} is devoted to the proof of our main result, Theorem~\ref{thm:main-Td} and contains most of the technical material. Section~\ref{sec:corollary} shows Corollary~\ref{cor:two-paths} yielding a lower bound for the expected distance between two independent fBm's on $\R^d$. Finally, in Section~\ref{sec:thm:discrete} we show how to modify our derivations to obtain a variant of Theorem~\ref{thm:main-Td} where fBm is sampled at discrete times.

\subsection*{Acknowledgements} The authors thank L.~Galeati for useful discussions and suggesting the use of the local non-determinism of fBm, which greatly simplified our initial approach. % shortened and strengthened our preliminary results.

\section{Notation and preliminary results} \label{sec:notation}

We write throughout $\T^d = \R^d/\Z^d$ for the $d$-dimensional flat torus, i.e., endowed with the distance
\[
\dT \bra{x,y} = \min_{k \in \Z^d} \abs{x-y-k}, \quad \text{for $x,y \in \T^d$,} 
\]
where $|u| = \sqrt{ u \cdot u}$ denotes the Euclidean norm of $u \in \R^d$. We often conveniently identify functions and measures on $\T^d$ with their periodic lift to $\R^d$. We write $|A|$ for the Lebesgue measure of a Borel set $A \subseteq \R^d$ or $A \subseteq \T^d$, and $\int_A f$ for the Lebesgue integral of $f$ on $A$. When a measure $\mu$ is absolutely continuous with respect to the Lebesgue measure (if on $\R^d$ or $\T^d$) or the counting measure (if on a countable space, such as $\Z^d$) we tacitly identify it with its density. We write $\delta_0$ for the Dirac measure with unit mass at $0$, and often treat it as a function (which is fully rigorous on a discrete space, otherwise it means that we restrict Lebesgue integration accordingly). %For a function $\phi$, we use the noation $\nabla \phi$ for the gradient, $\nabla\cdot \phi$ for the divergence, $\Delta \phi$ for the Laplacian and $\nabla^2 \phi$ for the Hessian. 

\subsection{Fourier Analysis} We first recall some basic facts of Fourier analysis on $\T^d$  that will be used below. Given a (real or possibly vector valued) function $f\in L^1(\T^d)$ or a finite (possibly signed) measure $\mu$ on $\T^d$, for $\xi \in \Z^d$, we write
\[ \hat f (\xi) = \int_{\T^d} \exp\bra{ - 2 \pi i \xi \cdot x} f(x) \,\d x, \quad \hat \mu (\xi) = \int_{\T^d} \exp\bra{ - 2 \pi i \xi \cdot x}  \,\d \mu (x),\]
for their Fourier transforms. We will use throughout Plancherel identity, for $f \in L^2(\T^d)$,
\begin{equation}\label{eq:plancherelidentity}
\int_{\T^d} |f|^2(x) \,\d x = \sum_{\xi \in \Z^d} | \hat f |^2(\xi),
\end{equation}
as well as the following generalization to higher (even) exponents $p = 2 m$, $m \ge 1$: for (possibly vector valued) $f \in L^p(\T^d)$,
\begin{equation}\label{eq:plancherel-higher} \int_{\T^d} | f |^{p}(x) \,\d x  =  \sum_{ \xi \in (\Z^d)^p} \prod_{i=1}^p  \hat{f}(\xi_i) \delta_0\bra{ \xi_1 +  \ldots + \xi_p},\end{equation}
that can be seen formally by using the inversion formula
\[ f(x) = \sum_{\xi \in \Z^d} \exp\bra{ 2 \pi i \xi\cdot x} \hat{f}(\xi),\]
developing the $p$-th  power $|f(x)|^p$ and using the  orthogonality 
\[ \int_{\T^d} \exp\bra{ 2 \pi i \xi \cdot x} \,\d x = \delta_0(\xi).\]
Notice that a suitable application of Young convolution inequality on $\Z^d$ to the right-hand side of \eqref{eq:plancherel-higher} yields a special case of the classical Hausdorff-Young inequality
\begin{equation}\label{eq:hausdorff-young}  \bra{ \int_{\T^d} | f |^{p}(x) \,\d x}^{1/p}  \le   \bra{ \sum_{\xi \in \Z^d} |\hat{f}|^{q}(\xi) }^{1/q},\end{equation}
with $q=p/(p-1)$ conjugate exponent, which in fact holds for every $p \ge 2$, not necessarily even integer.
%A more rigorous argument, in case of real-valued $f$, uses Plancherel theorem to $f^{p/2}$ and recalls that the Fourier transform of a product is the convolution of the transforms,
%\[ \widehat{ f g} = \hat{f} * \hat{g}. \]
%and that $\overline{\hat{f}}(\xi) = \hat{f}(-\xi)$ since $f$ is real-valued (for vector valued functions, one needs to consider $|f|^{p/2-1} f$ and argue similarly).
The heat semigroup $(P_t)_{t \ge 0}$ on $\T^d$ can be defined in various equivalent ways, via convolution with respect to a Gaussian kernel
\[ P_t \mu  = \mu * G_t, \quad \text{with $G_t(x) =  \sum_{k \in \Z^d} \exp\bra{ -|x+k|^2/(2t)} (2 \pi t)^{-d/2}$}\]
or  via the Fourier multiplier 
\begin{equation}\label{eq:fourier-heat} \widehat{ P_t \mu}(\xi) =  \exp\bra{- 2 \pi^2 t |\xi|^2} \hat{\mu}(\xi), \quad \text{for $\xi \in \Z^d$}.\end{equation}
We will crucially use the fact that the semigroup transforms possibly singular measures $\mu$ into smooth densities. The generator of $(P_t)_{t \ge 0}$ is (half) the Laplacian operator $\frac{1}{2} \Delta$. We will use also the Fourier transforms of the solution to the Poisson equation $-\Delta f = \mu$, with  $\hat{\mu}(0) = \mu(\R^d)  =0$,
\[  \hat{f}(\xi) = \frac{1}{4 \pi^2} \frac{\hat{\mu}(\xi)}{|\xi|^2},\]
and its gradient
\begin{equation}\label{eq:gradient-fourier} 
\widehat{ \nabla f }(\xi) =\frac{i \xi }{2\pi  } \frac{\widehat{\mu}\bra{\xi}}{|\xi|^2}.
\end{equation}

\subsection{Wasserstein distance}

Let $p \ge 1$. On a metric space $(X, \dist)$, we say that a Borel measure $\mu$ has  finite $p$-th moment if
\[ \int_X \dist(x, x_0)^p \,\d \mu(x)< \infty,\]
for some (hence any) $x_0 \in X$. Given two measures $\mu$, $\nu$ with same total mass $\mu(X) = \nu(X)$, and finite $p$-th moment, their $p$-th Wasserstein distance is given by
\begin{equation}\label{es:wasserstein} W_p(\mu, \nu) := \inf\brag{ \int_{X\times X} \dist(x,y)^p \,\d \pi(x,y) \, : \, \pi \in \Gamma(\mu, \nu)}^{1/p},\end{equation}
where $\Gamma(\mu, \nu)$ is the set of couplings between $\mu$, $\nu$, i.e., measures on $X\times X$ whose first and second marginals are respectively $\mu$ and $\nu$. A simple application of H\"older's inequality gives that, for $1 \le p \le q$,
\begin{equation}\label{eq:monotonicity} W_p^p(\mu, \nu) \le \mu(X)^{1-p/q} \bra{ W_q^q(\mu, \nu)}^{p/q}.\end{equation}

For every $p\ge 1$, the definition \eqref{es:wasserstein} above yields a distance, in particular the triangle inequality holds. The Kantorovich dual formulation reads, for $p =1$,
\begin{equation}\label{eq:kantorovich} W_p(\mu, \nu) = \sup \brag{ \int_X f \,\d (\mu -\nu) \, : \, \Lip(f) \le 1},\end{equation}
where the Lipschitz constant $\Lip(\cdot)$ is given by
\[ \Lip \bra{f} := \sup_{ x\neq y} \frac{ |f(y)-f(x)|}{\dist(x,y)}.\]
For $p>1$, duality is slightly more involved but  we will not need it.

\medskip
In our setting $X = \T^d$,  many tools are at our disposal to bound the Wasserstein distance in terms of (possibly) simpler quantities to analyse. In particular, we will make use of the following result (see \cite{bobkov2021simple} for related bounds).

\begin{lemma}\label{lem:general-transport-pde-bound}
Let $\mu$, $\nu$ be measures both on $\T^d$ with same total mass $\mu(\T^d) = \nu(\T^d)$ and for every $\varepsilon>0$ let $u_\varepsilon \in H^1(\T^d)$ solve the Poisson equation
$$ -\Delta u_\varepsilon = P_\varepsilon (\mu - \nu).$$
Then, there exists $C = C(d)>0$ such that
\begin{equation}\label{eq:upper-p} W_1( \mu, \nu) \le  \inf_{\epsilon>0} \brag{ C \mu(\mathbb{T}^d) \epsilon^{1/2}  + \|\nabla u_\varepsilon \|_{L^2}},\end{equation}
and
\begin{equation}\label{eq:lower-p} W_1(\mu,  \nu)  \ge \sup_{M, \epsilon >0} \brag{  \frac 1 M \| \nabla u_\varepsilon \|^2_{L^2}  - \frac C {M^3} \| \nabla u_\varepsilon\|_{L^4}^4 }.\end{equation}
If moreover $\nu = \mu(\T^d) \mathscr{L}^d_{\T^d}$ has constant density, then, for $p > 1$, there exists $C = C(d,p)>0$ such that
\begin{equation}\label{eq:wasserstein-p>1} W_p^p(\mu, \mu(\T^d) \mathscr{L}^d_{\T^d} ) \le C \inf_{\epsilon>0} \brag{  \mu(\mathbb{T}^d) \epsilon^{p/2}  + \mu(\mathbb{T}^d)^{1-p} \norm{ \nabla u_\varepsilon }_{L^p}^p}.\end{equation}
\end{lemma}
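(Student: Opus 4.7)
The plan is to regularize $\mu$ (and, when useful, $\nu$) by the heat semigroup $P_\epsilon$, so that the Poisson potential $u_\epsilon$ is well defined and smooth, and then to compare the regularized measures through PDE-level estimates. A common ingredient for all three bounds is the heat-flow displacement estimate
\[
W_p^p(\sigma, P_\epsilon \sigma) \le C \sigma(\T^d)\, \epsilon^{p/2},
\]
valid for every finite positive measure $\sigma$ on $\T^d$ and every $p\ge 1$. I would prove it by coupling each Dirac mass $\delta_x$ with the periodized Gaussian of variance $\epsilon$ centred at $x$, and bounding the $p$-th absolute moment of the Gaussian explicitly.

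For \eqref{eq:upper-p} the triangle inequality reduces matters to bounding $W_1(P_\epsilon \mu, P_\epsilon \nu)$, the other two pieces being $O(\mu(\T^d)\sqrt\epsilon)$. By Kantorovich duality, for any $1$-Lipschitz $f$,
\[
\int f\, d(P_\epsilon\mu - P_\epsilon \nu) = -\int f\, \Delta u_\epsilon\, dx = \int \nabla f \cdot \nabla u_\epsilon\, dx \le \|\nabla u_\epsilon\|_{L^1} \le \|\nabla u_\epsilon\|_{L^2},
\]
using $|\T^d|=1$ in the last step. The upper bound \eqref{eq:wasserstein-p>1} for $p>1$ follows in the same spirit: using $(a+b)^p \le 2^{p-1}(a^p+b^p)$, reduce to $W_p^p(P_\epsilon \mu, m\mathscr{L}^d_{\T^d})$ with $m := \mu(\T^d)$, and bound it via the Benamou--Brenier formula along an interpolation $(\rho_t)_{t\in[0,1]}$ from $m$ to $P_\epsilon \mu$ with velocity $v_t = -\nabla u_\epsilon/\rho_t$ (a direct check shows the continuity equation holds), yielding
\[
W_p^p(P_\epsilon \mu, m\mathscr{L}^d_{\T^d}) \le \int_0^1\!\int_{\T^d} \rho_t^{1-p}|\nabla u_\epsilon|^p\, dx\, dt.
\]
The main technical point is the choice of $\rho_t$: the naive linear interpolation $\rho_t = (1-t)m + t\rho_\epsilon$ produces an integrable $t$-factor only for $p<2$, so for $p \ge 2$ one must modify the interpolation (for instance, by splitting the set where $\rho_\epsilon \ge m/2$ from its complement and treating each piece separately, or by a non-linear reparametrization) in order to guarantee $\rho_t \gtrsim m$ uniformly and so recover the prefactor $m^{1-p}$.

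The lower bound \eqref{eq:lower-p} is the most delicate and is where the main obstacle lies. The plan is to exhibit an explicit $1$-Lipschitz competitor in the dual formulation of $W_1$. Let $\tilde u_\epsilon$ be the $M$-Lipschitz truncation of $u_\epsilon$ obtained by infimal convolution with $M\dT$, and set $f := P_\epsilon \tilde u_\epsilon / M$; since $P_\epsilon$ is a contraction for the Lipschitz norm on $\T^d$, one has $\Lip(f)\le 1$. Using the self-adjointness of $P_\epsilon$ and integration by parts,
\[
W_1(\mu,\nu) \ge \int f\, d(\mu-\nu) = \frac{1}{M}\int \tilde u_\epsilon\, d\bra{P_\epsilon(\mu-\nu)} = \frac{1}{M}\int_{\T^d} \nabla \tilde u_\epsilon \cdot \nabla u_\epsilon\, dx.
\]
On the ``good'' set $\{|\nabla u_\epsilon|\le M\}$ the infimal convolution is inactive, so $\nabla \tilde u_\epsilon = \nabla u_\epsilon$ and the contribution is $M^{-1}\int_{\{|\nabla u_\epsilon|\le M\}} |\nabla u_\epsilon|^2\, dx$; on its complement one has $|\nabla \tilde u_\epsilon| \le M < |\nabla u_\epsilon|$, and the Chebyshev-type bounds $\int_{\{|\nabla u_\epsilon|>M\}} |\nabla u_\epsilon|^2 \le M^{-2}\|\nabla u_\epsilon\|_{L^4}^4$ and $\int_{\{|\nabla u_\epsilon|>M\}} |\nabla u_\epsilon| \le M^{-3}\|\nabla u_\epsilon\|_{L^4}^4$ together yield the correction $CM^{-3}\|\nabla u_\epsilon\|_{L^4}^4$. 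The chief difficulty is arranging the truncation so that the self-adjointness trick applies cleanly, which is precisely why one works with $P_\epsilon \tilde u_\epsilon$ rather than $\tilde u_\epsilon$ itself, and so that the error is absorbed exactly by the $L^4$ norm appearing in the statement.
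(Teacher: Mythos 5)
Your argument for the first upper bound \eqref{eq:upper-p} is essentially the same as the paper's: heat-flow displacement estimate, triangle inequality, Kantorovich duality, integration by parts, Cauchy--Schwarz. No issues there.

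The lower bound \eqref{eq:lower-p} has a genuine gap. You define $\tilde u_\varepsilon$ by infimal convolution with $M\dT$ and claim that ``on the set $\{|\nabla u_\varepsilon|\le M\}$ the infimal convolution is inactive, so $\nabla\tilde u_\varepsilon = \nabla u_\varepsilon$.'' This is false: the infimal convolution $\tilde u_\varepsilon(x) = \inf_y\{u_\varepsilon(y) + M\dT(x,y)\}$ is a \emph{global} operation, and $\tilde u_\varepsilon(x)=u_\varepsilon(x)$ requires $u_\varepsilon(x)\le u_\varepsilon(y)+M\dT(x,y)$ for \emph{all} $y$, which is not guaranteed by a pointwise bound on $|\nabla u_\varepsilon(x)|$. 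A small flat plateau adjacent to a steep region will get pulled down by the envelope even though the local gradient is small. As a consequence, your Chebyshev bounds, which are applied to the set $\{|\nabla u_\varepsilon| > M\}$, do not control the actual ``bad set'' $\{\tilde u_\varepsilon \neq u_\varepsilon\}$. The correct tool here is a Lipschitz-truncation theorem in which the agreement set is $\{\mathcal{M}(|\nabla u_\varepsilon|)\le M\}$ ($\mathcal{M}$ the Hardy--Littlewood maximal function), and the measure of the complement is then bounded via the weak-type $(1,1)$ or strong-type $(q,q)$ bound for $\mathcal{M}$; this is precisely what the paper invokes by citing \cite[Lemma~5.2]{wang2019limit}, yielding $|\{f^M \neq u_\varepsilon\}|\le C M^{-4}\|\nabla u_\varepsilon\|_{L^4}^4$. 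Once you have that estimate, your Young/H\"older manipulation of the error term is correct and matches the paper's. Structurally, the two routes (you composing your test function with $P_\varepsilon$ and using self-adjointness versus the paper contracting $W_1$ under the heat flow) are equivalent, so the only missing ingredient is the correct quantitative truncation lemma.

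For \eqref{eq:wasserstein-p>1} you attempt to reprove the inequality $W_p(P_\varepsilon\mu, m\mathscr{L}^d_{\T^d}) \lesssim m^{(1-p)/p}\|\nabla u_\varepsilon\|_{L^p}$ via Benamou--Brenier with a linear interpolation, and you correctly identify that this breaks for $p\ge 2$ since $\int_0^1 (1-t)^{1-p}\,dt$ diverges. However, your proposed fixes (splitting on $\{\rho_\varepsilon\ge m/2\}$, or an unspecified reparametrization) are not worked out and it is not evident they close the argument without further ideas. The paper sidesteps this entirely by citing \cite[Theorem~2]{ledoux2017optimal}, which establishes $W_p(\rho, 1) \le p\|\nabla\Delta^{-1}(\rho-1)\|_{L^p}$ for all $p\ge 1$ when the reference density is uniform; you should either appeal to that result or give a complete alternative proof (it is a nontrivial lemma, not a one-line modification of the naive interpolation).
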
 

\begin{proof}
%Without loss of generality, we may assume that both $\mu$ and $\nu$ are probabilities. 
By a simple rescaling, we can assume that $\mu$ and $\nu$ are probability measures. The triangle inequality gives
\begin{equation}\label{eq:triangle-ineq-W1-upper}
W_1 \bra{\mu, \nu} \leq W_1 \bra{\mu, P_\epsilon\mu} + W_1 \bra{\nu, P_\epsilon\nu} + W_1 \bra{P_\epsilon \mu, P_\epsilon\nu}.
\end{equation}
Note that for any $p\geq 1, \epsilon > 0$ and for any probability $\mu$, one has
\begin{equation}\label{eq:upper-bound-smoothed-Wp}
W_p \bra{\mu, P_\epsilon \mu} \leq C {\epsilon}^{1/2}.
\end{equation}
Indeed, since $\mu \otimes P_\epsilon \mu$ is an admissible candidate for the transport problem, we may write
\[
\begin{split}
W_p^p \bra{\mu, P_\epsilon \mu} & \leq  \int_{\T^d \times \T^d } \dist(x,y)^p \,\d \mu(x) \,\d P_\epsilon \mu (y) \\ 
& \leq \int_{\T^d \times \T^d} \sum_{k \in \mathbb{Z}^d} \abs{x-y-k}^p \exp \bra{ -\abs{x-y-k}^2/(2 \varepsilon)}\bra{2 \pi \varepsilon}^{- d/2} \,\d y \,\d \mu (x) \\
& \le C  \varepsilon^{-d/2} \int_{\R^d} \abs{u}^p e^{-\abs{u}^2} \,\d u \leq C \epsilon^{\frac p 2}. 
\end{split}
\]
The latter and \eqref{eq:triangle-ineq-W1-upper} combine to
\[
W_1 \bra{\mu, \nu} \le C  {\epsilon}^{1/2} + W_1 \bra{P_\epsilon \mu, P_\epsilon\nu}.
\]
By \eqref{eq:kantorovich} and integrating by parts we may write
\[
\begin{split}
W_1 \bra{P_\epsilon \mu, P_\epsilon\nu} & = \sup_{\Lip(f) \le 1} -\int_{\mathbb{T}^d} f \Delta u_\varepsilon  = \sup_{\Lip(f) \le 1} \int_{\mathbb{T}^d} \nabla f \nabla u_\varepsilon\\
& \le \int_{\mathbb{T}^d} | \nabla u_\varepsilon| \le \| \nabla u_\varepsilon \|_{L^2},
\end{split}
\]
where in the last line we used the Cauchy-Schwarz inequality. 

\medskip
Let us turn to \eqref{eq:lower-p}. By \cite[Lemma~5.2]{wang2019limit} for any constant $M > 0$ there exists an $M$-Lipschitz function $f^M$ such that 
\begin{equation}\label{eq:est-neq-set-LusLip}
\abs{ \brag{f^M \neq u_\varepsilon } } \leq \frac C {M^4} \| \nabla u_\varepsilon \|_{L^4}^4.
\end{equation}
By the contraction property of the heat kernel (see for instance \cite[Lemma~5.2]{santambrogio2015optimal}) and \eqref{eq:kantorovich}, arguing as before, we may write
\[
\begin{split}
W_1 \bra{\mu , \nu} & \ge W_1 \bra{P_\epsilon \mu, P_\epsilon \nu} \\
& \ge \frac1M \int_{\mathbb{T}^d} \nabla f^M \nabla u_\varepsilon \\
& = \frac1M \| \nabla u_\varepsilon \|_{L^2}^2 + \frac1M \int_{\mathbb{T}^d} \bra{\nabla f^M - \nabla u_\varepsilon } \nabla u_\varepsilon.
\end{split}
\]
In view of \eqref{eq:lower-p} it remains to show that
\begin{equation}\label{eq:stimaintegrale}
\frac1M \int_{\mathbb{T}^d} \bra{\nabla f^M - \nabla u_\varepsilon } \nabla u_\varepsilon \ge - \frac C{M^3} \|\nabla u_\epsilon\|_{L^4}^4.
\end{equation}
Note that the integral on the right hand side vanishes on the set $\{ f^M = u_\varepsilon\}$ due to the locality of the gradient. Furthermore, by Young's product inequality and H\"older's inequality
\[
\begin{split}
   \abs{\frac1M \int_{\{f^M \neq u_\varepsilon\}} \bra{\nabla f^M - \nabla u_\varepsilon } \nabla u_\varepsilon} & \leq M \abs{ \brag{f^M \neq u_\varepsilon }} + \frac3 {2M} \int_{\{f^M \neq u_\epsilon\}} \abs{\nabla u_\varepsilon }^2 \\
 &  \leq M \abs{ \{f^M \neq u_\varepsilon \}}  \\
 & \quad \quad + \frac 3{2M} \abs{ \brag{f^M \neq u_\varepsilon }}^\frac12   \bra{\int_{\{f^M \neq u_\varepsilon\} } \abs{\nabla u_\varepsilon }^4}^\frac12 \\
&  \stackrel{\eqref{eq:est-neq-set-LusLip}}{\leq} \frac C {M^3} \| \nabla u_{\varepsilon}\|_{L^4}^4, 
\end{split}
\]
which yields \eqref{eq:stimaintegrale}.

\medskip
Finally, let us turn to \eqref{eq:wasserstein-p>1}, by the triangle inequality, we estimate
\[
W_p \bra{\mu, 1} \leq W_p \bra{\mu, P_\epsilon \mu} + W_p \bra{P_\epsilon \mu , 1} \stackrel{\eqref{eq:upper-bound-smoothed-Wp}}{\le} C \epsilon^{1/2} + W_p \bra{P_\epsilon \mu, 1}.
\]
To estimate the second term on the right hand side, by uniformity of the second measure we may apply \cite[Theorem 2]{ledoux2017optimal} to get (recall that $\nu = \mathscr{L}^d_{\mathbb{T}^d}$)
\[
W_p \bra{P_\epsilon \mu, 1} \leq p \| \nabla  u_\epsilon \|_{L^p},
\]
which yields \eqref{eq:wasserstein-p>1} after taking $p$-th powers and using the inequality $(x+y)^p \le 2^{p-1} (x^p + y^p)$ for non-negative $x$, $y$.
%The definition
%
%To show \eqref{eq:upper-p}, we use the continuity of the inclusion $C^p(\T^d) \to H^p(\T^d)$
%
%For \eqref{eq:wasserstein-p>1} we rely on \cite{ledoux-gaussian}. 
\end{proof}

\subsection{Fractional Brownian Motion}
%\textcolor{blue}{FM: choose a reference for LND}
%Fractional Brownian motion, is a popular model for both short-range and long-range dependent phenomena arising from various fields ranging from mathematical finance to statistical mechanics, and many others. Indeed, it can be considered as a generalisation of the standard Brownian motion.\\

In this section we recall the definition and basic properties of fractional Brownian motion, referring to \cite{nourdin2012selected} for a detailed study of such processes and their properties.

%In order to study problem \eqref{matchingproblemdefinizione} we first introduce the definition of a fractional Brownian motion taking values on $\mathbb{R}$ and then we focus on the $d$-torus setting.

\begin{definition}\label{definizionefBm1d}
Let $H \in (0,1)$. A (real) fractional Brownian motion with Hurst index $H$ is a Gaussian stochastic process $B^H = \bra{B^H_t}_{t \ge 0}$ with continuous paths, centered and with covariance function
\begin{equation}\label{covarianzafbm}
\Ex\left [B^H_tB^H_s \right ] = \frac{1}{2} \left ( s^{2H} + t^{2H} - \left | t - s \right |^{2H} \right ).
\end{equation}
A fractional Brownian motion with values in $\R^d$ is given by $d$ independent (real) fractional Brownian motions, all with the same Hurst index $H$. A fractional Brownian motion with values in $\T^d$ is the natural projection of a fractional Brownian motion with values in $\R^d$.
\end{definition}

\begin{remark}
As is well-known, the case $H=1/2$ reduces to a standard Brownian motion. %We also include the limit case $H=1$, so that for $\R^d$-valued case,  $B^1_t = t N$, where $N$ is a standard Gaussian. 
Notice that our definition yields $B_0^H = 0$, but different starting points $x \in \R^d$ may be considered, introducing e.g.\ the process $B_t^H +X$ with an independent random variable $X \in \R^d$. For simplicity, we limit ourselves to the case $B_0^H = 0$ in our results.
\end{remark}

We now define the occupation and empirical measures of a fractional Brownian motion.

\begin{definition}[occupation and empirical measure]
Let $B^H$ be a fractional Brownian motion taking values in $\T^d$. For every $T\ge 0$, its occupation measure is the random measure % taking values on the torus define its occupation measure as 
\[
\mu_T := \int_0^T \delta_{B_t^H} \,\d t, 
\]
i.e.,  for every bounded Borel function $f$ on $\T^d$,
\[ \int_{\T^d} f  \,\d \mu_T = \int_0^T f(B_t^H) \,\d t.\]
%We denote by $\Ex\braq{\mu_T}$ its expectation, i.e., the measure such that for every bounded Borel function $f$ on $X$,
%\[ \int_X f \d \Ex\braq{\mu_T} = \int_0^T \Ex\braq{ f(B_t^H)} \d t.\]
We define the empirical measure of a fractional Brownian motion $B^H$ as the occupation measure renormalized to a probability measure, i.e. the quantity $\mu_T/T.$
\end{definition} 

We notice that $\mu_T(\T^d) = T$. 
%he probability measure $T^{-1} \mu_T$ plays the role of the empirical measure of a discrete-time process $\frac{1}{n} \sum_{i=1}^n \delta_{X_i}$. 
We also consider a discrete approximation of $\mu_T$ given by
\begin{equation}\label{eq:discrete-empirical}
\mu_{\tau,T} := \sum_{t=1}^{\lfloor T/\tau \rfloor} \delta_{B_{t \tau}^H} \tau,
\end{equation}
where $\tau>0$ and with total mass is $\mu_{\tau, T}(\T^d) = \lfloor T/\tau \rfloor \tau$.

\medskip
In the proof of our results we need precise bounds on the (mixed) $p$-th moments of the Fourier transform of the occupation measure of $B^H$. The key property that we use to simplify our estimates is the \emph{local  non-determinism} of $B^H$, i.e., the lower bound on the covariance operator, for some $C = C(H,p)>0$%\footnote{does it depend on $p$ also? F.M. \textcolor{blue}{Yes, in \cite{Xiao} before (2.3) it is stated that the constant $C$ may depend also on the number of sample times we consider}}
\begin{equation}\label{eq:local-non-determinism}
\operatorname{Cov}\bra{ B^H_{t_1} - B^H_{t_0}, B^H_{t_2} - B^H_{t_1}, \ldots, B^H_{t_p} - B^H_{t_{p-1}}} \ge C \operatorname{diag}( |t_1- t_0|^{2H}, \ldots, |t_p - t_{p-1}|^{2H}),
\end{equation}
for any choice $0 \le t_0 < t_1 < \ldots < t_p \leq T$, where the inequality is in the sense of quadratic forms. A full proof of this fact, valid for $H \in (0,1)$ is given in \cite[Section~2.1]{xiao2006properties}, see also \cite[Section~2.4]{galeati2020prevalence} for a simpler argument.%\footnote{better write it also here, since we need it for every choice of $t_i$'s, not only close ones. FM: \textcolor{blue}{that should be fine, looking at the definition in Galeati-Gubinelli since we are considering a one sided fBm we need just that $t_p<\infty$. I do not know whether adding a proof (which to keep it short would be the same as Galeati Gubinelli) is a good idea}} %We notice that it is straightforward for $H=1/2$ due to independence of increments.

%\footnote{we need also upper bounds for the fourth moment and $\xi$ close to the origin}

%
%\section{Proof of Theorem~\ref{thm:main-Rd}}
%
%In this section, we let $d\ge 1$, $H \in (0,1)$ and $B = B^H$ be a fractional Brownian motion with Hurst index $H$, with values in $\R^d$, and write $\mu_T$, for $T \ge 0$, for its empirical measure and $\nu_T = \Ex\braq{ \mu_T}$.
%
%We argue separately for the upper and lower bounds, and in both cases we rely on Lemma~\ref{lem:general-transport-pde-bound}.
%
%Let $\varepsilon = \varepsilon(T,d,H)>0$ to be specified later. Then, by \eqref{eq:upper-p} with $\mu_T$ and $\nu_T$ instead of $\mu$, $\nu$, and taking expectation, we obtain
%$$\Ex\braq{ W_1(\mu_T, \nu_T)} \le CT \varepsilon^{1/2} + \Ex\braq{ \norm{}}$$

%First, we prove the asymptotic upper bound in the case  $p=1$ (Proposition~\ref{prop:upper-bound-p-1-rd}) using Lemma~\ref{lem:g}  which also suggests us how to  optimally choose $\epsilon = \epsilon(d,H,T)$. Then, we prove a general bound for the expectations of the Lebesgue norms of $\nabla (-\Delta)^{-1} P_\epsilon(\mu_T - T)$ (Proposition~\ref{prop:p-moments-torus}). Finally, we deduce at once the asymptotic lower bound for $p=1$ (and in fact for every $p\ge1$) as well as the upper bounds for every $p\ge 1$ (Proposition~\ref{prop:lower-bound}).

\section{Proof of Theorem~\ref{thm:main-Td}} \label{sec:thm:main}

In this section, we let $d\ge 1$, $H \in (0,1)$ and $B = B^H$ be a fractional Brownian motion with Hurst index $H$, with values in $\T^d$, and write $\mu_T = \int_0^T \delta_{B_s} \,\d s$, for $T \ge 0$, for its occupation measure.  Given $\epsilon>0$, we consider a solution $u_{\varepsilon}$ to the Poisson PDE
\begin{equation}\label{eq:defueps}
-\Delta u_{\varepsilon} = P_{\epsilon} (\mu_T - T).
\end{equation}

For clarity of exposition, we split the proof into four parts. First, we collect some useful upper and lower bounds on moments of the Fourier transform of the empirical measure. Next, we prove the asymptotic upper bound in the case  $p=1$ (Proposition~\ref{prop:upper-bound-p-1-torus}), essentially because it is a simple argument and it suggests us how to  optimally choose $\epsilon = \epsilon(d,H,T)$ in Lemma~\ref{lem:general-transport-pde-bound}. Then, we prove a general bound for the expectations of the Lebesgue norms of $\nabla u_\epsilon$ (Proposition~\ref{prop:p-moments-torus}). Finally, we deduce at once the asymptotic lower bound for $p=1$ (and in fact for every $p\ge1$) as well as the upper bounds for every $p\ge 1$ (Proposition~\ref{prop:lower-bound}).

\subsection{Fourier transform moment bounds}

\begin{lemma}\label{lem:upper-bound-p}
%Let $d\ge 1$, $H \in (0,1)$ and $B = B^H$ be a fractional Brownian motion with Hurst index $H$, with values in $\T^d$, and empirical measure $\mu_T$, for $T \ge 0$. 
For every $p \in \mathbb{N}$, %there exists a constant $C= C(p,d,H)>0$ such that,
for every $T \ge 0$ and $\xi \in (\Z^d)^p$,
\begin{equation}\label{eq:upper-bound-fourier-modes}
\abs{ \Ex\braq{ \prod_{j=1}^p\widehat{\mu_T}(\xi_j) }} \lesssim  \sum_{\sigma \in \mathcal{S}_p} \prod_{j=1}^{p} \min\brag{ \frac{1}{| \sum_{i=1}^j \xi_{\sigma(i)} |^{1/H}}, T}.
\end{equation} 
\end{lemma}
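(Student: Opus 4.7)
The plan is to unfold the product of Fourier transforms into a $p$-fold time integral, use the local non-determinism of $B^H$ to reduce the joint characteristic function of the sample to a product of Gaussian factors in the independent increments, and then separate the resulting time integrals.

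First, since $\widehat{\mu_T}(\xi) = \int_0^T e^{-2\pi i \xi \cdot B^H_t}\,\d t$, I would write
\[
\prod_{j=1}^p \widehat{\mu_T}(\xi_j) = \int_{[0,T]^p} \exp\bra{-2\pi i \sum_{j=1}^p \xi_j \cdot B^H_{t_j}} \,\d t_1 \cdots \d t_p
\]
and split $[0,T]^p$ into the $p!$ ordered simplices indexed by the permutation $\sigma \in \mathcal{S}_p$ sorting the times. On the simplex $\{t_{\sigma(1)} < \cdots < t_{\sigma(p)}\}$, set $s_i := t_{\sigma(i)}$, $s_0 := 0$, and $\Delta_k := B^H_{s_k} - B^H_{s_{k-1}}$. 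Interchanging the order of summation gives
\[
\sum_{j=1}^p \xi_j \cdot B^H_{t_j} = \sum_{i=1}^p \xi_{\sigma(i)} \cdot \sum_{k=1}^i \Delta_k = \sum_{k=1}^p \eta_k \cdot \Delta_k, \qquad \eta_k := \sum_{i=k}^p \xi_{\sigma(i)}.
\]

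The core of the argument is the characteristic function estimate. Since the $d$ scalar components of $B^H$ are independent, the Gaussian vector $(\Delta_k)_{k=1}^p$ has covariance block-diagonal across coordinates, each block being the same $p\times p$ covariance of the one-dimensional increments. By the local non-determinism bound \eqref{eq:local-non-determinism}, this block dominates (as a quadratic form) $c\operatorname{diag}(|s_k - s_{k-1}|^{2H})$. Computing the Gaussian characteristic function therefore yields
\[
\abs{\Ex\braq{\exp\bra{-2\pi i \sum_{k=1}^p \eta_k \cdot \Delta_k}}} \le \exp\bra{-c' \sum_{k=1}^p |\eta_k|^2 (s_k - s_{k-1})^{2H}}.
\]
Changing variables to the gap lengths $r_k := s_k - s_{k-1}$, enlarging the simplex $\{r_k \ge 0,\,\sum_k r_k \le T\}$ to the cube $[0,T]^p$, and factorizing yields
\[
\abs{\Ex\braq{\prod_{j=1}^p \widehat{\mu_T}(\xi_j)}} \le \sum_{\sigma \in \mathcal{S}_p} \prod_{k=1}^p \int_0^T e^{-c'|\eta_k|^2 r^{2H}} \,\d r.
\]
Each factor is trivially $\le T$ and, via the substitution $u = (c')^{1/(2H)}|\eta_k|^{1/H} r$ together with $\int_0^\infty e^{-u^{2H}}\,\d u < \infty$, also $\le C/|\eta_k|^{1/H}$. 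This matches $\min\{T,\,C/|\eta_k|^{1/H}\}$, with the convention $1/0 = \infty$.

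The statement uses $\sum_{i=1}^j \xi_{\sigma(i)}$, whereas the argument produces $\eta_k = \sum_{i=k}^p \xi_{\sigma(i)}$; these differ by composing $\sigma$ with the reversal permutation, which is a bijection on $\mathcal{S}_p$, so the two sums over $\sigma$ of the resulting products agree. The only non-routine step is the use of local non-determinism: without it, the covariance of $(\Delta_k)_k$ would contribute cross terms and the quadratic form in the exponential would not diagonalize, preventing the factorization of the time integral into single-variable integrals that produces the $1/|\eta_k|^{1/H}$ factors encoding the Hurst regularity.
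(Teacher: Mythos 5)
Your proposal is correct and follows essentially the same route as the paper: split $[0,T]^p$ into ordered simplices, Abel-sum to expose the increments, apply the local non-determinism bound \eqref{eq:local-non-determinism} to the Gaussian characteristic function, change variables to the gaps, extend to $[0,T]^p$, and factorize into single integrals estimated by $\min\{C|\eta_k|^{-1/H},T\}$. Your explicit remark that the suffix sums $\sum_{i=k}^{p}\xi_{\sigma(i)}$ produced by the computation match the prefix sums $\sum_{i=1}^{j}\xi_{\sigma(i)}$ in the statement after composing with the reversal permutation is a small detail the paper glosses over, and makes your write-up slightly more careful.
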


%Notice that the first factor $T$ in the right hand side could be also implicitly accounted by extending the range of the product up to $p$ in the right hand side. Moreover, because of the definition of fractional Brownian motion on the torus, the same bound applies also in that case (provided that $\xi_i \in \Z^d$).

\begin{proof}
By definition,
\[ \widehat{\mu_T}(\xi_j) = \int_0^T \exp\bra{ 2\pi i \xi B_{t_j}} \,\d t_j,\]
so that
\[ \prod_{i=1}^p \widehat{\mu_T}(\xi_i) = \int_{[0,T]^p} \exp\bra{ 2 \pi i \sum_{j=1}^p \xi_j B_{t_j}} \,\d t_1 \ldots \,\d t_p.\]
We split integration over $[0,T]^p$ into $p!$ simplexes, one of every $\sigma \in \mathcal{S}_p$,
\[ \Delta_\sigma := \brag{ 0 \le t_{\sigma(1)} \le \ldots  \le t_{\sigma(p)} \le T}.\]
We now argue only in the case $\sigma$ being the identity permutation, the other cases being analogous. A summation by parts gives %using also the condition $\sum_{i=1}^p \xi_i = 0$, gives
\[ \sum_{j=1}^p \xi_j B_{t_j} =   B_{0} \sum_{i=1}^p \xi_i + \sum_{j=1}^{p}  (B_{t_j}- B_{t_{j-1}}) \sum_{i=j}^{p} \xi_i = \sum_{j=1}^{p}  (B_{t_j}- B_{t_{j-1}}) \sum_{i=j}^{p} \xi_i,\]
where we let $t_0 = 0$ and we use that $B_0= 0$. % \footnote{we may also allow for more general independent initial laws, e.g. uniform in the case of $X = \T^d$ but should not matter much}.

Using this identity, the Fourier transform (characteristic function) of a Gaussian random variable and \eqref{eq:local-non-determinism}, it follows that
\[ \Ex\braq{ \exp\bra{ 2 \pi i \sum_{j=1}^p \xi_j B_{t_j}}} \le \exp\bra{- \frac C 2 \sum_{j=1}^{p} \abs{ \sum_{i=j}^{p} \xi_i  }^2 |t_{j}- t_{j-1}|^{2H}}.\]
We then bound from above the integral
\[ \begin{split}  \int_{\Delta_\sigma} \Ex\braq{ \exp\bra{ 2 \pi i \sum_{j=1}^p \xi_j B_{t_j}}} &  \,\d t_1 \ldots \,\d t_p \\
 & \le  \int_{\Delta_\sigma}  \exp\bra{- \frac C 2 \sum_{j=1}^{p} \abs{ \sum_{i=j}^{p} \xi_i  }^2 |t_{j}- t_{j-1}|^{2H}} \,\d t_1 \ldots \,\d t_p\\
& \le \int_{[0,T]^p }\exp\bra{- \frac C 2 \sum_{j=1}^{p} \abs{ \sum_{i=j}^{p} \xi_i  }^2 s_j^{2H}} \,\d s_1 \ldots \,\d s_p,
\end{split}\]
where we performed the change of variables $s_j= t_j- t_{j-1}$, for $j \ge 1$, recalling that $t_0=0$. To conclude, we split into a product of $p$ integrals that we bound separately
\[ \int_0^T \exp\bra{- \frac C 2  \abs{ \sum_{i=j}^{p} \xi_i  }^2 s_j^{2H}}  \,\d s_j \le \min\brag{ \frac{C}{ \abs{ \sum_{i=j}^{p} \xi_i}^{1/H}}, T},\]
for a (possibly different) constant $C = C(p,d, H)>0$.
\end{proof}

%Our second result is useful for $\xi$ sufficiently small: in particular, it is relevant only in the case of a fractional Brownian motion with values in $\R^d$.

Letting $p=2$ and $\xi: = \xi_1  = -\xi_2 \neq 0$ in \eqref{eq:upper-bound-fourier-modes} yields the upper bound
$$  \Ex\braq{  \abs{ \widehat{\mu_T}(\xi)}^2  } \le C \frac{T}{|\xi|^{1/H}}. $$
For our purposes, we need also a companion lower bound, that can be simply obtained as the next lemma shows.

\begin{lemma}\label{lem:second-moment-continuous}
%Let $d\ge 1$, $H \in (0,1)$ and $B = B^H$ be a fractional Brownian motion with Hurst index $H$, with values in $\T^d$, and empirical measure $\mu_T$, for $T \ge 0$. Then, 
For $T$ sufficiently large (depending on $H$ and $d$ only) it holds, for every $\xi \in \Z^d$, $\xi\neq 0$,
\begin{equation}\label{eq:lower-bound-fourier-modes}
\Ex\braq{ \abs{ \widehat{\mu_T}(\xi)}^2 } \sim T / |\xi|^{1/H}.
 \end{equation}
%  \quad \text{if $|\xi| > T^{-H}$,}
% \end{equation}
% and
% \begin{equation}\label{eq:bound-variance-small-xi}  \operatorname{Var}\bra{ \widehat{\mu_T(\xi)}}  \lesssim T^{2H+1} |\xi|^2  \quad \text{if $|\xi| \le T^{-H}$.}
%\end{equation} 
\end{lemma}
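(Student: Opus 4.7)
The upper bound $\Ex[|\widehat{\mu_T}(\xi)|^2] \lesssim T/|\xi|^{1/H}$ is already a special case of Lemma~\ref{lem:upper-bound-p} with $p=2$, $\xi_1 = \xi = -\xi_2$, so only the matching lower bound needs to be proved. The plan is to compute $\Ex[|\widehat{\mu_T}(\xi)|^2]$ essentially exactly and then extract the desired asymptotics by a rescaling argument. Expanding the square via Fubini,
\[
\Ex\!\left[|\widehat{\mu_T}(\xi)|^2\right] = \int_0^T\!\!\int_0^T \Ex\!\left[\exp\!\bra{-2\pi i \xi\cdot(B_t - B_s)}\right]\,\d s\,\d t,
\]
and since $B_t - B_s$ is a centered Gaussian vector in $\R^d$ whose $d$ coordinates are i.i.d.\ with variance $|t-s|^{2H}$, the characteristic function gives $\Ex[\exp(-2\pi i \xi\cdot(B_t-B_s))] = \exp(-2\pi^2 |\xi|^2 |t-s|^{2H})$. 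Note that the exponential $e^{-2\pi i \xi\cdot x}$ is $\Z^d$-periodic, so it does not matter whether we work with the torus or the lifted $\R^d$-valued process.

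Using the symmetry in $(s,t)$ and the change of variable $u = t-s$, the double integral collapses to
\[
\Ex\!\left[|\widehat{\mu_T}(\xi)|^2\right] = 2\int_0^T (T-u)\, e^{-2\pi^2 |\xi|^2 u^{2H}}\,\d u.
\]
Now I would rescale $v = |\xi|^{1/H} u$ to obtain
\[
\Ex\!\left[|\widehat{\mu_T}(\xi)|^2\right] = \frac{2T}{|\xi|^{1/H}} \int_0^{|\xi|^{1/H} T} e^{-2\pi^2 v^{2H}}\,\d v \; - \; \frac{2}{|\xi|^{2/H}} \int_0^{|\xi|^{1/H} T} v\, e^{-2\pi^2 v^{2H}}\,\d v.
\]

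The two integrals $I_1(R) := \int_0^R e^{-2\pi^2 v^{2H}}\,\d v$ and $I_2(R) := \int_0^R v\, e^{-2\pi^2 v^{2H}}\,\d v$ are monotonically increasing in $R$ and bounded above, uniformly in $R$, by the convergent integrals over $[0,\infty)$ (finite because $H>0$). Since $|\xi| \ge 1$ for every nonzero $\xi \in \Z^d$, picking $T \ge T_0(H,d)$ sufficiently large ensures $|\xi|^{1/H} T \ge T_0 \ge 1$, so that $I_1(|\xi|^{1/H} T) \ge c_1 > 0$ for a constant $c_1$ depending only on $H$. Combined with the uniform upper bound $I_2(|\xi|^{1/H} T) \le c_2$, the second term is dominated by
\[
\frac{2c_2}{|\xi|^{2/H}} \;\le\; \frac{2c_2}{T |\xi|^{1/H}} \cdot \frac{T}{|\xi|^{1/H}},
\]
which can be made smaller than half of the first term by enlarging $T_0$ if needed. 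This yields $\Ex[|\widehat{\mu_T}(\xi)|^2] \ge c_1 T/|\xi|^{1/H}$, completing the proof.

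There is no serious obstacle here: the computation is essentially exact thanks to Gaussianity, and the only delicate point is making sure the $T_0$ in ``$T$ sufficiently large'' does not depend on $\xi$, which is handled by the scaling and the lower bound $|\xi| \ge 1$.
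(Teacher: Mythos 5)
Your proof is correct and follows essentially the same approach as the paper: both compute $\Ex[|\widehat{\mu_T}(\xi)|^2]$ exactly via the Gaussian characteristic function, rescale by $|\xi|^{1/H}$, and use $|\xi|\ge 1$ to make the ``$T$ sufficiently large'' requirement uniform in $\xi$. The paper restricts the integration domain to $s,t\in[0,T/2]$ and lower-bounds by a fixed truncated integral, whereas you carry out the exact reduction $2\int_0^T (T-u)e^{-2\pi^2|\xi|^2 u^{2H}}\,\d u$ and then absorb the error term $\frac{2}{|\xi|^{2/H}}I_2$ into the main term; this is a minor bookkeeping difference, not a distinct strategy.
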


\begin{proof}
%Since 
%$$ \operatorname{Var} \bra{ \widehat{\mu_T(\xi)}} =  \Ex\braq{  \abs{ \widehat{\mu_T}(\xi)}^2  }  - \abs{ \Ex\braq{  \widehat{\mu_T}(\xi)  } }^2 \le \Ex\braq{  \abs{ \widehat{\mu_T}(\xi)}^2  },$$
%the upper bound for $|\xi| > T^{-H}$ follows in fact from the previous lemma.  To obtain the other inequalities we write explicitly
%\[ \begin{split}
% \Ex\braq{  \widehat{\mu_T}(\xi) } & = \int_0^T \Ex\braq{ \exp\bra{ i \xi B_{t}}} \d t \ = \int_0^T \exp\bra{ -  |\xi|^2t^{2H}} \d t \\ 
% & = |\xi|^{-1/H}\int_0^{T |\xi|^{1/H}} \exp\bra{-s^{2H}} ds  \lesssim |\xi|^{-1/H}.
% \end{split}\]
%%If $|\xi|\le T^{-H}$ we can instead estimate the integral as follows:
%%$$ \int_0^T \exp\bra{ -  |\xi|^2t^{2H}} \d t = T + O( |\xi|^2 T^{2H+1} ).$$
%Similarly, we write
Thanks to Lemma \ref{lem:upper-bound-p} we only need to prove the lower bound in \eqref{eq:lower-bound-fourier-modes}. By definition of the Fourier transform we may write
\[ \begin{split}
 \Ex\braq{ \abs{  \widehat{\mu_T}(\xi) }^2 } & = \int_0^T \int_0^T \Ex\braq{ \exp\bra{ 2 \pi i \xi \cdot (B_{t} - B_s)}} \,\d s \,\d t  \\
 & = \int_0^T\int_0^T  \exp\bra{ -  2 \pi^2| \xi|^2|t-s|^{2H}} \,\d s \,\d t \\
 & \ge   \int_0^{T/2} \int_0^{T -s} \exp\bra{-2\pi^2|\xi|^2 t^{2H}} \,\d t \,\d s  \\ 
 &\ge \int_0^{T/2} \int_0^{T/2} \exp\bra{-2\pi^2|\xi|^2 t^{2H}} \d t \,\d s\\
 &  \ge C \frac{T}{ |\xi|^{1/H}} \int_0^{T |\xi|^{1/H}/2} \exp\bra{-2\pi^2t^{2H}} \,\d t \\
 & \ge C\frac{T}{ |\xi|^{1/H}} \int_0^{1/2} \exp\bra{-2\pi^2 t^{2H}} \,\d t,
 \end{split}\]
 where the last inequality holds if $T \ge 1$, so that $T |\xi|^{1/H}/2 > 1/2$. %This inequality combined with that for $\Ex\braq{ \widehat{\mu_T}(\xi)}$ gives the claimed inequality, if $T$ is sufficiently large.
% If $|\xi|\le T^{-H}$ we can instead estimate the integral as follows:
%$$ \int_0^T \int_0^T \exp\bra{ -  |\xi|^2|t-s|^{2H}} \d s  \d t = T^2  + O( |\xi|^2 T^{2H+2} ),$$
%which combined with the estimate for $\Ex\braq{\widehat{\mu_T(\xi)}}$ gives  
%$$  \operatorname{Var} \bra{ \widehat{\mu_T(\xi)}}  = T^2 + O( |\xi|^2 T^{2H+2} ) - \bra{ T + O( |\xi|^2 T^{2H+1}) }^2 = O( |\xi|^2 T^{2H+2} )$$
%as claimed.
\end{proof}

\subsection{Upper bound, case $p=1$}

\begin{lemma}\label{lem:g}
Let $d \ge 1$, $\epsilon>0$ and define, for $\xi \in \Z^d$, 
 \[ g(\xi) = \frac{ \exp\bra{ - \epsilon |\xi|^2}}{|\xi|+1}.\]
 Then, for every  $p \ge 1$,
 %, there exists a constant $C = C(d,p)>0$ such that
 \[ \| g \|_{\ell^p(\Z^d)} \sim  \begin{cases} 1 & \text{if $d<p$,}\\
 |\log \epsilon|^{1/p} & \text{if $d=p$,}\\
 \epsilon^{-\frac 1 2 \bra{ d/p -1 }} & \text{ if $d>p$.}\end{cases}\]
 \end{lemma}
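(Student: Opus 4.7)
The plan is to reduce the $\ell^p(\Z^d)$ sum to a Euclidean integral in polar coordinates and then analyze three regimes in $r$ separately.

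First I would write
\[
\norm{g}_{\ell^p(\Z^d)}^p = \sum_{\xi \in \Z^d} \frac{\exp\bra{-p \epsilon |\xi|^2}}{(|\xi|+1)^p}.
\]
Since $\xi \mapsto \exp(-p\epsilon|\xi|^2)/(|\xi|+1)^p$ is a radially non-increasing function, a standard comparison between a sum and its integral (e.g.\ by bracketing the summand via unit cubes centered at $\xi \in \Z^d$) yields, up to dimensional constants,
\[
\norm{g}_{\ell^p(\Z^d)}^p \sim 1 + \int_{\R^d} \frac{\exp\bra{-p\epsilon|x|^2}}{(|x|+1)^p}\,\d x \sim 1 + \int_0^\infty \frac{\exp\bra{-p\epsilon r^2}}{(r+1)^p} r^{d-1}\,\d r,
\]
where the additive $1$ accounts for the contribution of $\xi = 0$. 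For $\epsilon$ small (say $\epsilon \le 1$) the last integral is bounded below by a positive constant via its restriction to $r \in [0,1]$.

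Next I would evaluate $I(\epsilon) := \int_0^\infty \exp(-p\epsilon r^2) r^{d-1}/(r+1)^p \,\d r$ by splitting at the natural scales $r=1$ and $r = 1/\sqrt{\epsilon}$, using that $(r+1) \sim 1$ for $r \le 1$ and $(r+1) \sim r$ for $r \ge 1$:
\begin{itemize}
\item Case $d < p$: the integrand is dominated by $r^{d-1}/(r+1)^p$, which is integrable on $[0,\infty)$; hence $I(\epsilon) = O(1)$, and together with the constant lower bound this gives $\norm{g}_{\ell^p}^p \sim 1$.
\item Case $d = p$: the region $[0,1]$ contributes $O(1)$; the region $[1, 1/\sqrt{\epsilon}]$ contributes
\[
\int_1^{1/\sqrt\epsilon} \frac{\d r}{r} \sim \tfrac{1}{2} |\log \epsilon|,
\]
up to the Gaussian factor which is bounded between two positive constants there; the tail $[1/\sqrt{\epsilon},\infty)$ contributes $O(1)$ by Gaussian decay. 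Hence $I(\epsilon) \sim |\log \epsilon|$.
\item Case $d > p$: the region $[0,1]$ contributes $O(1)$, and on $[1,\infty)$ the integrand is comparable to $\exp(-p\epsilon r^2) r^{d-1-p}$. Substituting $u = \sqrt{p \epsilon}\, r$ gives
\[
\int_1^\infty \exp(-p\epsilon r^2) r^{d-1-p}\,\d r \sim \epsilon^{-(d-p)/2} \int_0^\infty \exp(-u^2) u^{d-1-p}\,\d u \sim \epsilon^{-(d-p)/2}.
\]
\end{itemize}
Taking $p$-th roots yields the three stated asymptotics.

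Each step is routine; the only mild subtlety is justifying the sum-to-integral comparison and keeping track of the boundary contribution at $\xi = 0$ (which is actually the dominant one in the subcritical case $d<p$), but this is straightforward from monotonicity in $|\xi|$. No deep obstacle is expected.
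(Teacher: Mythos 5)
Your argument is correct and follows essentially the same route as the paper: compare the $\ell^p$ sum with a radial integral, pass to polar coordinates, and split at the scales $r=1$ and $r=1/\sqrt{\epsilon}$ to handle the three regimes $d<p$, $d=p$, $d>p$. The only (harmless) cosmetic difference is that you explicitly isolate the $\xi=0$ contribution as an additive $1$, whereas the paper restricts the comparison integral to $|\xi|\ge 1$ from the start; the case analysis and the substitution $u=\sqrt{p\epsilon}\,r$ are identical in substance.
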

 
\begin{proof}
By comparing the series with the integral and using polar coordinates we may write
\[ \sum_{\xi \in \Z^d} g(\xi)^p \sim \int_{|\xi| \ge 1} \frac{\exp\bra{-p \epsilon|\xi|^2}}{|\xi|^{p}} \,\d \xi =  \int_1^\infty\frac{\exp\bra{-p \epsilon r^2}}{r^{p}}  r^{d-1} \,\d r.\]
Note that if $d<p$ we may estimate
\[
\int_1^\infty \exp\bra{-p \epsilon r^2} r^{d-1 -p} \,\d r \sim \int_1^\infty r^{d-1 -p} \,\d r \sim 1.
\]
Otherwise, we may split the integral and write
\[
\begin{split}
\int_1^\infty \exp\bra{-p \epsilon r^2} r^{d-1 -p} \,\d r & = \int_1^{\frac{1}{\sqrt{\epsilon}}} \exp\bra{-p \epsilon r^2} r^{d-1 -p} \,\d r + \int_{\frac{1}{\sqrt{\epsilon}}}^\infty \exp\bra{-p \epsilon r^2} r^{d-1 -p} \,\d r \\
& = I_1 + I_2.
\end{split}
\]
We first estimate the former term on the right hand side
\[
I_1 \sim \int_1^{\frac{1}{\sqrt{\epsilon}}} r^{d-1-p} \,\d r \sim \begin{cases}
\abs{\log \epsilon} & \text{if}\ d=p, \\
\epsilon^{-\frac12 \bra{d-p}} & \text{if}\ d>p.
\end{cases}
\]
Finally, by the change of variable $s = \sqrt{p \epsilon} r$ we can bound the latter term on the right hand side by
\[
I_2 \sim \epsilon^{-\frac12 \bra{d-1-p}} \int_1^\infty e^{- s^2} s^{d-1-p} \,\d s \sim \epsilon^{-\frac12 \bra{d-1-p}},
\]
which concludes the proof.
\end{proof}

\begin{proposition}\label{prop:upper-bound-p-1-torus}
Define $\epsilon = \epsilon(d,H,T)>0$ as follows:
\begin{equation}\label{eq:choice-epsilon} \sqrt{\epsilon} = \begin{cases} T^{-1/2}& \text{if $d< 2+\frac1H$,}\\
								  ( \log (T) / T)^{1/2} & \text{if $d = 2+\frac1H$,}\\
								  T^{-\frac{1}{d-1/H}} & \text{if $d>2+\frac1H$,} \end{cases}
\end{equation}
and let $u_\epsilon$ be a solution to \eqref{eq:defueps}. Then, as $ T \to \infty$,
\begin{equation}\label{eq:asymptotics-L2-gradient}
\Ex\braq{\norm{ \nabla u_\epsilon }_{L^2}} \sim  T \sqrt{\epsilon},
\end{equation}
hence, by \eqref{eq:upper-p},
\[ \Ex\braq{W_1(\mu_T, T) } \lesssim T \sqrt{\epsilon}. \]
\end{proposition}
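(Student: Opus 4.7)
The plan is to reduce $\Ex[\norm{\nabla u_\epsilon}_{L^2}^2]$ to an explicit Fourier-side sum via Plancherel, evaluate that sum asymptotically in the three regimes by the same integral-test trichotomy used to prove Lemma~\ref{lem:g}, and then pass from the second moment to $\Ex[\norm{\nabla u_\epsilon}_{L^2}]$ itself.

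\textbf{Second-moment computation.} Using \eqref{eq:fourier-heat} and \eqref{eq:gradient-fourier}, since subtracting $T$ kills the zero mode, for $\xi \neq 0$ one has
\[
\widehat{\nabla u_\epsilon}(\xi) \;=\; \frac{i\xi}{2\pi}\cdot\frac{e^{-2\pi^2\epsilon|\xi|^2}}{|\xi|^2}\,\widehat{\mu_T}(\xi),
\]
so Plancherel's identity gives
\[
\norm{\nabla u_\epsilon}_{L^2}^2 \;=\; \frac{1}{4\pi^2}\sum_{\xi\neq 0}\frac{e^{-4\pi^2\epsilon|\xi|^2}}{|\xi|^2}\,|\widehat{\mu_T}(\xi)|^2.
\]
Taking expectations and using the two-sided bound \eqref{eq:lower-bound-fourier-modes} of Lemma~\ref{lem:second-moment-continuous}, valid for $T$ sufficiently large,
\[
\Ex\bigl[\norm{\nabla u_\epsilon}_{L^2}^2\bigr] \;\sim\; T\sum_{\xi\neq 0}\frac{e^{-c\epsilon|\xi|^2}}{|\xi|^{2+1/H}}.
\]

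\textbf{Evaluating the sum.} A polar-coordinate integral comparison parallel to the proof of Lemma~\ref{lem:g}, now with the exponent $2+1/H$, reduces the last sum to $\int_1^\infty e^{-c\epsilon r^2}r^{d-3-1/H}\,\d r$, whose behaviour splits at $d=2+1/H$: it is $\sim 1$ if $d<2+1/H$, $\sim|\log\epsilon|$ if $d=2+1/H$, and $\sim\epsilon^{-(d-2-1/H)/2}$ if $d>2+1/H$. Plugging in the three prescriptions of \eqref{eq:choice-epsilon} and simplifying (the critical case using $|\log\epsilon|\sim\log T$), one checks by direct algebra that $\Ex[\norm{\nabla u_\epsilon}_{L^2}^2]\sim T^2\epsilon$ in all three cases.

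\textbf{From the second moment to the claim.} By Jensen's inequality, $\Ex[\norm{\nabla u_\epsilon}_{L^2}]\le\sqrt{\Ex[\norm{\nabla u_\epsilon}_{L^2}^2]}\lesssim T\sqrt{\epsilon}$, which combined with \eqref{eq:upper-p} immediately yields the stated bound on $\Ex[W_1(\mu_T,T)]$. For the reverse inequality in \eqref{eq:asymptotics-L2-gradient}, setting $X:=\norm{\nabla u_\epsilon}_{L^2}^2$, the log-convexity (Lyapunov) inequality $\Ex[X]^{3/2}\le\Ex[\sqrt{X}]\,\Ex[X^2]^{1/2}$ reduces matters to the fourth-moment bound $\Ex[X^2]\lesssim(\Ex[X])^2$. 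Expanding $X^2$ via Plancherel produces a double sum over $(\xi_1,\xi_2)$ of $\Ex[|\widehat{\mu_T}(\xi_1)|^2|\widehat{\mu_T}(\xi_2)|^2]$, which we control via Lemma~\ref{lem:upper-bound-p} applied to the 4-tuple $(\xi_1,-\xi_1,\xi_2,-\xi_2)$. The ``Wick-pairing'' permutations, in which two intermediate partial sums vanish, produce exactly the leading order $T^2/(|\xi_1|^{1/H}|\xi_2|^{1/H})$ and reassemble after summation into $(\Ex[X])^2$, while the remaining ``cross'' permutations, in which only the final partial sum vanishes, lead to convolution-type terms bounded via the same trichotomy as in the second-moment calculation.

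\textbf{Main obstacle.} The delicate step is the fourth-moment bound above: enumerating the $4!$ permutations appearing in Lemma~\ref{lem:upper-bound-p} and verifying that the off-diagonal cross terms, together with the degenerate strata $\xi_2=\pm\xi_1$, do not exceed the leading order $(\Ex[X])^2$. Once this is in hand, the rest of the proof is a routine Plancherel computation combined with the trichotomy of Lemma~\ref{lem:g}.
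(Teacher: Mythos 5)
Your second-moment computation and the reduction to the trichotomy of Lemma~\ref{lem:g} match the paper's proof exactly; the upper bound $\Ex[\norm{\nabla u_\epsilon}_{L^2}] \lesssim T\sqrt{\epsilon}$ via Jensen is also what the paper uses (and it is all that is needed for the displayed bound on $\Ex[W_1(\mu_T,T)]$). Where you depart from the paper is in the ``reverse inequality'' for \eqref{eq:asymptotics-L2-gradient}.

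You have in fact caught a subtlety the paper glosses over. The paper's proof establishes $\Ex[\norm{\nabla u_\epsilon}_{L^2}^2]\sim T^2\epsilon$ and then says ``taking the square root''; strictly this only gives $\Ex[\norm{\nabla u_\epsilon}_{L^2}] \le \sqrt{\Ex[\norm{\nabla u_\epsilon}_{L^2}^2]} \sim T\sqrt\epsilon$, not the matching lower bound on the first moment. If you track how \eqref{eq:asymptotics-L2-gradient} is invoked downstream, you will see this is harmless: Proposition~\ref{prop:lower-bound} actually needs the second-moment lower bound $\Ex[\norm{\nabla u_\epsilon}_{L^2}^2]\gtrsim T^2\epsilon$ (which \emph{is} proved here), not a first-moment lower bound. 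So the ``$\sim$'' in \eqref{eq:asymptotics-L2-gradient} is best read as shorthand for ``second moment $\sim T^2\epsilon$, hence first moment $\lesssim T\sqrt\epsilon$ by Jensen.''

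Your Lyapunov route $\Ex[X]^{3/2}\le\Ex[\sqrt X]\,\Ex[X^2]^{1/2}$ (with $X=\norm{\nabla u_\epsilon}_{L^2}^2$) is a correct way to upgrade to a genuine two-sided first-moment bound; it reduces to $\Ex[X^2]\lesssim(\Ex[X])^2$, i.e., $\Ex[\norm{\nabla u_\epsilon}_{L^2}^4]\lesssim T^4\epsilon^2$. Two remarks: (i) you need not reprove this from scratch by enumerating permutations, since $\bigl(\int_{\T^d}|\nabla u_\epsilon|^2\bigr)^2 \le \int_{\T^d}|\nabla u_\epsilon|^4$ by Cauchy--Schwarz on the unit-volume torus, and the latter is exactly Proposition~\ref{prop:p-moments-torus} with $p=4$; (ii) there is no circularity, because the proof of Proposition~\ref{prop:p-moments-torus} for $p\ge 4$ does not rely on the present statement (its reference back to Proposition~\ref{prop:upper-bound-p-1-torus} concerns only the already-proved case $p=2$). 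With those observations your proposal is complete, and it is actually somewhat more careful than the paper's own proof, at the expense of pulling forward the $L^4$ moment bound.
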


\begin{proof}
By Plancherel's identity (see \eqref{eq:plancherelidentity}) and Lemma~\ref{lem:upper-bound-p} we may estimate the second moment of $\nabla u_\epsilon$ by
\[ \begin{split} \Ex\braq{\norm{ \nabla u_{\epsilon} }_{L^2}^2} & = (2 \pi)^{-2} \sum_{\xi \in \Z^d \setminus\brag{0}} \Ex\braq{\abs{\widehat{\mu_T}(\xi)}^2} \frac{\exp\bra{-\epsilon|\xi|^2/2 }}{|\xi|^2}\\
& \sim   T  \sum_{\xi \in \Z^d \setminus\brag{0}} \frac{\exp\bra{-\epsilon|\xi|^2/2}}{|\xi|^{2+1/H}}\\
& \sim T \norm{ g }_{\ell^{2 +1/H}(\Z^d)}^{2+1/H}
\end{split}\]
where $C = C(d,H)>0$ and $g$ is as in Lemma~\ref{lem:g} with $\epsilon/(2+1/H)$ instead of $\epsilon$. By Lemma \ref{lem:g} we have
 \[ \norm{ g }_{\ell^{2 +1/H}(\Z^d)}^{2+1/H}  \sim \begin{cases} 1 & \text{if $d<2+\frac1H$,}\\
 |\log \epsilon|  & \text{if $d=2+\frac1H$,}\\
 \epsilon^{-\frac 1 2 \bra{ d -2 - 1/H }} & \text{ if $d>2+\frac1H$.}
 \end{cases}\]
Finally taking the square root, we obtain \eqref{eq:asymptotics-L2-gradient},  indeed
\[ \sqrt{T  \norm{g}_{\ell^{2 +1/H}(\Z^d)}^{2+1/H}} \sim \begin{cases} \sqrt{T} = T \cdot T^{-1/2} & \text{if $d<2+\frac1H$,}\\
 \sqrt{T |\log \epsilon|} \sim T \cdot \bra{ (\log T)/T  }^{1/2}  & \text{if $d=2+\frac1H$,}\\
 \sqrt{T} \epsilon^{-\frac 1 4 \bra{ d -2 - 1/H }}% = T^{1/2}\cdot T^{\frac 1 2 \bra{d-2-1/H}/(d-1/H)} 
 = T \cdot T^{-\frac{1}{d-1/H}} & \text{if $d>2+\frac1H$.}%\qedhere
 \end{cases}\]
\end{proof}

\subsection{Moment bounds for $\nabla u_\epsilon$}

Before we move to the second part of the argument, we need a generalized Young convolution inequality, which can be easily proved by induction.

\begin{lemma}\label{lem:young}
Let $p \in \mathbb{N}$, $p\ge 2$, let $f_1, \ldots, f_p: \Z^d \to [0, \infty]$, $F_2, \ldots, F_p: \Z^d \to [0, \infty]$ be measurable and define
\[ G_p (\xi_1, \ldots, \xi_p) := \prod_{i=1}^p f_i(\xi_i) \prod_{j=2}^p F_j\bra{ \sum_{i=1}^j \xi_i}.\]
Let  $\lambda_i, \Lambda_j \in [1, \infty]$, for $i=\brag{1, \ldots, p}$, $j \in \brag{2, \ldots, p}$ such that
\begin{equation}\label{eq:lemyoungcond}
\begin{split}
&\frac1{\lambda_1} + \frac1{\lambda_2} + \frac1{\Lambda_2} = 2, \\
&\frac1{\lambda_k} + \frac1{\Lambda_k} = 1 \quad \mbox{if}\; 3\le k \le p.
\end{split}
\end{equation}
Then,
\[ \norm{G_p}_{\ell^1(\Z^{d\times p})} \le \prod_{i=1}^p \norm{f_i}_{\ell^{\lambda_i}(\Z^d)}\prod_{j=2}^p \norm{F_j}_{\ell^{\Lambda_j}(\Z^d)}.\]
\end{lemma}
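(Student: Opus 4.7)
The plan is to argue by induction on $p \ge 2$, peeling off one convolution at each step via the classical Young convolution inequality on $\Z^d$: $\norm{a*b}_{\ell^r} \le \norm{a}_{\ell^s}\norm{b}_{\ell^t}$ whenever $\tfrac1s+\tfrac1t = 1+\tfrac1r$, with $s,t,r \in [1,\infty]$. The shape of \eqref{eq:lemyoungcond} already suggests this: the first condition, involving three exponents, is the natural output of one application of Young plus H\"older at the base of the induction, while the conditions $\tfrac1{\lambda_k}+\tfrac1{\Lambda_k}=1$ for $k\ge 3$ are exactly the endpoint Young relation, for which the convolution output is controlled in $\ell^\infty$ and can be pulled out of the remaining sum.

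\textbf{Base case $p=2$.} I would sum over $\xi_2$ with $\eta = \xi_1+\xi_2$ fixed to rewrite
\[ \norm{G_2}_{\ell^1(\Z^{2d})} = \sum_{\eta \in \Z^d} F_2(\eta)\, (f_1 * f_2)(\eta), \]
and then apply H\"older's inequality with conjugate exponents $\Lambda_2, \Lambda_2'$ followed by Young's inequality, giving
\[ \norm{G_2}_{\ell^1(\Z^{2d})} \le \norm{F_2}_{\ell^{\Lambda_2}}\norm{f_1*f_2}_{\ell^{\Lambda_2'}} \le \norm{F_2}_{\ell^{\Lambda_2}}\norm{f_1}_{\ell^{\lambda_1}}\norm{f_2}_{\ell^{\lambda_2}}. \]
The admissibility requirement $\tfrac1{\lambda_1}+\tfrac1{\lambda_2}=1+\tfrac1{\Lambda_2'}=2-\tfrac1{\Lambda_2}$ is exactly the first line of \eqref{eq:lemyoungcond}.

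\textbf{Induction step $p-1 \to p$.} For $p \ge 3$, I would fix $\xi_1,\ldots,\xi_{p-1}$ and perform the innermost sum in $\xi_p$, writing, with $\check f_p(x):=f_p(-x)$,
\[ \sum_{\xi_p \in \Z^d} f_p(\xi_p)\, F_p\bra{\textstyle\sum_{i=1}^{p-1}\xi_i+\xi_p} = (F_p*\check f_p)\bra{\textstyle\sum_{i=1}^{p-1}\xi_i}. \]
Because $\tfrac1{\lambda_p}+\tfrac1{\Lambda_p}=1$, Young's inequality with $r=\infty$ yields the uniform bound $\norm{F_p*\check f_p}_{\ell^\infty}\le \norm{f_p}_{\ell^{\lambda_p}}\norm{F_p}_{\ell^{\Lambda_p}}$, which can be pulled out of the remaining $(p-1)$-fold summation. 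What is left is precisely $\norm{G_{p-1}}_{\ell^1(\Z^{(p-1)d})}$ built from $f_1,\ldots,f_{p-1}$ and $F_2,\ldots,F_{p-1}$, and the reduced exponents $\lambda_1,\ldots,\lambda_{p-1},\Lambda_2,\ldots,\Lambda_{p-1}$ satisfy the corresponding instance of \eqref{eq:lemyoungcond}, so the induction hypothesis closes the argument.

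I do not anticipate a real obstacle here: the only delicate point is bookkeeping, namely verifying that the endpoint case $r=\infty$ of Young is used at each step $3 \le k \le p$, while the non-endpoint case, combined with one H\"older, handles the two-factor residual at the bottom of the induction.
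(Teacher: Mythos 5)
Your proposal is correct and follows essentially the same route as the paper: an induction on $p$ in which the base case $p=2$ is handled by a Hölder followed by a Young step (exploiting the first line of \eqref{eq:lemyoungcond}), while each inductive step uses the endpoint Young inequality $\norm{F_k * \check f_k}_{\ell^\infty} \le \norm{f_k}_{\ell^{\lambda_k}}\norm{F_k}_{\ell^{\Lambda_k}}$ to strip off the $k$-th factor. If anything, your formulation of the inductive step (summing over $\xi_p$ first and pulling out the $\ell^\infty$ norm of the one-variable convolution $F_p * \check f_p$) is slightly more precise than the paper's shorthand $G_p = (G_{p-1}*f_p)F_p$, which is an abuse of notation since $G_{p-1}$ is a function of $p-1$ variables rather than one.
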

\begin{proof}
Our argument relies on an iterative application of H\"older's inequality and Young's convolution inequality. We argue by induction over $p\ge 2$. Let us consider the case $p=2$.  Let us choose $\alpha$ such that
\[  \frac{1}{\alpha} + \frac{1}{\Lambda_2}= 1,\]
so that by H\"older's inequality we may write
\begin{equation}\label{eq:lemyoungbase}
\begin{split}
\norm{ G_2 }_{\ell^1 (\Z^{d \times 2})}& = \norm{ (f_1 * f_2) F_2 }_{\ell^1(\Z^{d \times 2})} \le \norm{ f_1 * f_2}_{\ell^{\alpha}(\Z^{d})} \norm{F_2}_{\ell^{\Lambda_2}(\Z^{d})}.
\end{split}
\end{equation}
Note that by the assumption $\Lambda_2 \in [1, \infty]$ it follows that $\alpha \ge 1$, thus we can choose $\alpha$ being such that
\[ \frac{1}{\alpha} = \frac{1}{\lambda_1}+\frac{1}{\lambda_2}-1,\]
which ensures \eqref{eq:lemyoungcond}. By Young's convolution inequality we may estimate the first term on the right hand side of \eqref{eq:lemyoungbase}
\[\norm{ f_1 * f_2}_{\ell^{\alpha}(\Z^{d})} 
 \le \norm{f_1}_{\ell^{\lambda_1}(\Z^d)}\norm{f_2}_{\ell^{\lambda_2}(\Z^d)}.  \]
The latter and \eqref{eq:lemyoungbase} combines to
\[\begin{split} \norm{ G_2 }_{\ell^1 (\Z^{d \times 2})}& \le \norm{f_1}_{\ell^{\lambda_1}(\Z^d)}\norm{f_2}_{\ell^{\lambda_2}(\Z^d)}  \norm{F_2}_{\ell^{\Lambda_2}(\Z^d)}.
\end{split}\]
%
%The assumption $\Lambda_2 \in [1, \infty]$ ensures that $\alpha \ge 1$. Then, by H\"older and Young convolution inequality, 
%\[\begin{split} \norm{ G_2 }_{\ell^1 (\Z^{d \times 2})}& = \norm{ (f_1 * f_2) F_2 }_{\ell^1(\Z^{d \times 2})}\\
%& \le \norm{ f_1 * f_2}_{\ell^{\alpha}(\Z^{d})} \norm{F_2}_{\ell^{\Lambda_2}(\Z^{d})}\\
%& \le \norm{f_1}_{\ell^{\lambda_1}(\Z^d)}\norm{f_2}_{\ell^{\lambda_2}(\Z^d)}  \norm{F_2}_{\ell^{\Lambda_2}(\Z^d)}.
%\end{split}\]
Assuming that the thesis holds for $p-1 \ge 2$, we argue similarly to obtain it for $p$. Note that by definition $G_p =  (G_{p-1} * f_p) F_p$. Choosing $\lambda_p, \Lambda_p$ as in \eqref{eq:lemyoungcond} we may apply H\"older's inequality and Young's convolution inequalities to get
\[\begin{split} \norm{ G_p }_{\ell^{1}(\Z^{d \times p})}& = \norm{ (G_{p-1} * f_p) F_p }_{\ell^{1}(Z^{d \times p})}  \\
& \le \norm{G_{p-1}}_{\ell^{1}(\Z^{d\times (p-1)})}\norm{f_2}_{\ell^{\lambda_p}(\Z^{d})}  \norm{F_p}_{\ell^{\Lambda_p}(\Z^d)}.
\end{split}\]
Finally, by the inductive assumption the thesis holds. 
\end{proof}

We are now in a position to prove the key upper bound for even integral moments of $\nabla u_\epsilon$.

\begin{proposition}\label{prop:p-moments-torus}
Let $\epsilon = \epsilon(d,H,T)>0$ be as in \eqref{eq:choice-epsilon} 
and let $u_\epsilon$ be a solution to \eqref{eq:defueps}. Then, for every even $p \in \mathbb{N}$,  for every $T\ge 0$ sufficiently large,
\begin{equation}\label{eq:p-moment-gradient}
\Ex\braq{ \norm{ \nabla u_\epsilon}_{L^p}^p} \lesssim  (T \sqrt{\epsilon})^p.
\end{equation}
\end{proposition}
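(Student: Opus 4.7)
The plan is to extend the Fourier-analytic argument of Proposition~\ref{prop:upper-bound-p-1-torus} to higher even moments. First I combine the generalized Plancherel identity \eqref{eq:plancherel-higher} with the Fourier representations \eqref{eq:gradient-fourier}--\eqref{eq:fourier-heat} and bound the resulting expected product of Fourier coefficients of $\mu_T$ via Lemma~\ref{lem:upper-bound-p}. Since $f(\xi) := \exp(-2\pi^2 \epsilon|\xi|^2)/|\xi|$ is symmetric in the $\xi_i$'s, the sum over the $p!$ permutations in Lemma~\ref{lem:upper-bound-p} contributes only a constant factor; extracting the $j = p$ min factor (which equals $T$ under the Plancherel constraint $\eta_p = \sum \xi_i = 0$) yields
\[
\Ex \|\nabla u_\epsilon\|_{L^p}^p \lesssim T \sum_{\xi \in (\Z^d \setminus \{0\})^p,\ \sum_i \xi_i = 0} \prod_{i=1}^p f(\xi_i) \prod_{j=1}^{p-1} \min\{|\eta_j|^{-1/H}, T\}, \quad \eta_j := \sum_{i \le j} \xi_i.
\]

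To handle this sum, I would decompose according to the set $J \subseteq \{2,\ldots,p-2\}$ of intermediate indices where $\eta_j = 0$ (noting $\eta_1 = \xi_1 \neq 0$ and $\eta_{p-1} = -\xi_p \neq 0$). For integer $\eta \neq 0$ and $T \geq 1$, $\min\{|\eta|^{-1/H}, T\} = |\eta|^{-1/H}$, whereas at $\eta = 0$ the min equals $T$, so the set $J$ tracks the $T$-factors. The constraints $\eta_j = 0$ for $j \in J$ split $\xi$ into $|J| + 1$ consecutive blocks each summing to zero, and the sum factorizes as $\sum_J T^{|J|+1} \prod_\ell S^{(m_\ell)}$, where $(m_\ell)$ is the induced partition of $p$ and
\[
S^{(m)} := \sum_{\xi \in (\Z^d)^m :\ \sum_i \xi_i = 0,\ \xi_i \neq 0,\ \eta_j \neq 0} \prod_{i=1}^m f(\xi_i) \prod_{j=1}^{m-1} |\eta_j|^{-1/H}.
\]
The target \eqref{eq:p-moment-gradient} then reduces to the per-block estimate $S^{(m)} \lesssim T^{m-1} \epsilon^{m/2}$ for each $m \ge 2$, since $T^{|J|+1} \prod_\ell T^{m_\ell - 1} \epsilon^{m_\ell/2} = T^p \epsilon^{p/2} = (T\sqrt\epsilon)^p$ and there are only finitely many $J$'s.

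For $m = 2$, $S^{(2)} = \sum_{\xi \neq 0} \exp(-2\epsilon|\xi|^2)/|\xi|^{2 + 1/H} \lesssim T\epsilon$ is exactly the estimate from Proposition~\ref{prop:upper-bound-p-1-torus} via Lemma~\ref{lem:g}. For $m \geq 3$, I would apply Lemma~\ref{lem:young} to $S^{(m)}$ with $f_1(\xi) := f(\xi)|\xi|^{-1/H}$ (absorbing the $j = 1$ factor), $f_i := f$ for $i \geq 2$, $F_j(\eta) := |\eta|^{-1/H} \mathbf{1}_{\eta \neq 0}$ for $j = 2, \ldots, m-1$, and $F_m := \delta_0$. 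Taking $\Lambda_j$ just above $dH$ renders $\|F_j\|_{\ell^{\Lambda_j}} \sim 1$ (since then $\sum_{\eta \neq 0} |\eta|^{-\Lambda_j/H}$ converges), and the remaining exponents $\lambda_i$, determined by \eqref{eq:lemyoungcond}, control $\|f_i\|_{\ell^{\lambda_i}}$ through Lemma~\ref{lem:g}. The main obstacle is the precise optimization of the H\"older exponents: one needs specific asymptotic choices (e.g.\ $\lambda_m = \infty$, $\lambda_1, \lambda_2 \to 2dH/(2dH-1)$, and $\lambda_k \to dH/(dH-1)$ for $3 \leq k \leq m-1$) saturating the convergence threshold $\Lambda_j = dH$, together with a unified case analysis across the three regimes of \eqref{eq:choice-epsilon} including logarithmic corrections at the critical dimension $d = 2 + 1/H$.
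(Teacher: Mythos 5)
Your high-level strategy mirrors the paper's: you expand $\Ex\|\nabla u_\epsilon\|_{L^p}^p$ via the higher Plancherel identity \eqref{eq:plancherel-higher}, control the mixed Fourier moments with Lemma~\ref{lem:upper-bound-p}, remove the permutation sum by symmetry, and decompose the sum according to the set of vanishing partial sums. Your block decomposition by the set $J$ is a valid reorganization of the paper's inductive Step~2, and your per-block target $S^{(m)} \lesssim T^{m-1}\epsilon^{m/2}$ is precisely the paper's \eqref{eq:key-step}.

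The gap is in the application of Lemma~\ref{lem:young}. You keep all the Gaussian decay on the $f_i$'s and take $F_j(\eta) = |\eta|^{-1/H}\mathbf{1}_{\eta\neq 0}$ with $\Lambda_j$ just above $dH$, so that $\|F_j\|_{\ell^{\Lambda_j}}$ is finite and $\epsilon$-independent. Together with $F_m=\delta_0$ (forcing $\Lambda_m=1$, $\lambda_m=\infty$), the constraints \eqref{eq:lemyoungcond} pin
\[
\sum_{i=1}^{m-1}\frac1{\lambda_i} \;=\; m-1-\sum_{j=2}^{m}\frac1{\Lambda_j}\;\le\; m-1-\frac{m-2}{dH},
\]
and feeding this into Lemma~\ref{lem:g} gives, in the regime $d>2+1/H$ (where $T\sim\epsilon^{-(d-1/H)/2}$), at best
\[
S^{(m)} \;\lesssim\; \epsilon^{-\frac{m-1}{2}(d-1-1/H)} \;=\; T^{m-1}\epsilon^{(m-1)/2},
\]
even in the unattainable limit $\Lambda_j \downarrow dH$. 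This misses your target $T^{m-1}\epsilon^{m/2}$ by a full factor $\epsilon^{-1/2}$ for every block of size $m\ge 3$, and it is strictly worse in practice since $\|F_j\|_{\ell^{dH}}=\infty$. The same shortfall propagates to the other regimes of \eqref{eq:choice-epsilon} (whenever $d>1+1/H$), so the approach does not close.

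The missing idea is the paper's Gaussian redistribution: using $|\sum_{i\le j}\xi_i|^2 \le j\sum_{i\le j}|\xi_i|^2$, part of $\prod_i\exp(-\epsilon|\xi_i|^2/2)$ is moved onto the partial sums, so that \emph{every} factor in \eqref{eq:ready-for-young} becomes a power of $g(\xi)=\exp(-\epsilon|\xi|^2/C)/(|\xi|+1)$, including the $F_j$'s. With Gaussian-damped $F_j$'s one can choose $\Lambda_j$ strictly \emph{below} $dH$; the norm $\|F_j\|_{\ell^{\Lambda_j}}$ then remains finite, is $\epsilon$-dependent, and carries exactly the missing power of $\epsilon$, with constants uniform in the range of admissible $\Lambda_j=\bar d/(\bar d-1)$. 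A secondary point: eliminating $\xi_m=-\sum_{i<m}\xi_i$ (as the paper does in \eqref{eq:ready-for-young}, rather than carrying the constraint via $F_m=\delta_0$) merges $f_m$ into $F_{m-1}$, which thereby acquires Gaussian decay ``for free'' and already recovers the $\epsilon^{1/2}$; but without the redistribution on the intermediate $F_j$'s the exponents are still pinned at a threshold that cannot be attained, and the sharp constant-free rate cannot be reached.
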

\begin{proof}
{\sc Step 1.} We argue that in order to control the $p$-th moment of $\nabla u_\epsilon$ it is enough to show that there exists a constant $C$ such that
\begin{equation}\label{eq:general-sum-no-permutation} 
T \sum_{\xi_1, \ldots, \xi_p \in \Z^d\setminus \brag{0}}   \prod_{i=1}^p\frac{\exp\bra{- \epsilon |\xi_{i}|^2/2} }{|\xi_i|}  \prod_{j=1}^{p-1} \min\brag{ \frac{1}{| \sum_{i=1}^j \xi_{i} |^{1/H}}, T} \delta_0\bra{\sum_{i=1}^p \xi_{i}} \le C(T\sqrt{\epsilon})^p.
\end{equation}
Indeed, by \eqref{eq:fourier-heat}, \eqref{eq:gradient-fourier} and \eqref{eq:plancherel-higher} it follows that
\[\begin{split} \Ex&\braq{ \norm{ \nabla u_\epsilon }_{L^p}^p} \\
& \quad \quad \le C \sum_{\xi_1, \ldots, \xi_p \in \Z^d\setminus \brag{0}}  \prod_{i=1}^p\frac{\exp\bra{- \epsilon |\xi_i|^2/2} }{|\xi_i|} \abs{ \Ex\braq{ \prod_{i=1}^p \widehat{\mu_T}(\xi_i)}} \delta_0\bra{\sum_{i=1}^p \xi _i}.
\end{split}\]
By Lemma~\ref{lem:upper-bound-p}, we further bound from above the right hand side by a constant times
\[   T \sum_{\xi_1, \ldots, \xi_p \in \Z^d\setminus \brag{0}}   \prod_{i=1}^p\frac{\exp\bra{- \epsilon |\xi_i|^2/2} }{|\xi_i|} \sum_{\sigma \in \mathcal{S}_p} \prod_{j=1}^{p-1} \min\brag{ \frac{1}{| \sum_{i=1}^j \xi_{\sigma(i)} |^{1/H}}, T} \delta_0\bra{\sum_{i=1}^p \xi_i},\]
where the first $T$ term is due to the condition $\sum_{i=1}^p \xi_i = 0$.
Since, for every $\sigma \in \mathcal{S}_p$,
\[ \prod_{i=1}^p\frac{\exp\bra{- \epsilon |\xi_i|^2/2} }{|\xi_i|} =  \prod_{i=1}^p\frac{\exp\bra{- \epsilon |\xi_{\sigma(i)}|^2/2} }{|\xi_{\sigma(i)}|} \quad \text{and} \quad \delta_0\bra{\sum_{i=1}^p \xi_i} = \delta_0 \bra{\sum_{i=1}^p \xi_{\sigma(i)}},\]
we can exchange summations and reduce the problem to bound from above, for every $\sigma \in \mathcal{S}_p$,
\[ T \sum_{\xi_1, \ldots, \xi_p \in \Z^d\setminus \brag{0}}   \prod_{i=1}^p\frac{\exp\bra{- \epsilon |\xi_{\sigma(i)}|^2/2} }{|\xi_{\sigma(i)}|}  \prod_{j=1}^{p-1} \min\brag{ \frac{1}{| \sum_{i=1}^j \xi_{\sigma(i)} |^{1/H}}, T} \delta_0\bra{\sum_{i=1}^p \xi_{\sigma(i)}},\]
but the quantity above is independent of $\sigma$, after changing summation variables. Therefore, we may assume that $\sigma$ is the identity permutation, which establishes \eqref{eq:general-sum-no-permutation}.

\medskip
{\sc Step 2.} We argue by an induction argument that in order to prove \eqref{eq:general-sum-no-permutation} it is enough to show that for every $p$ there exists a constant $C$ such that
\begin{equation}\label{eq:key-step} T \sum_{\xi_1, \ldots, \xi_p}^\star   \prod_{i=1}^p\frac{\exp\bra{- \epsilon |\xi_{i}|^2/2} }{|\xi_{i}|}  \prod_{j=1}^{p-1}  \frac{1}{| \sum_{i=1}^j \xi_{i} |^{1/H}} \delta_0\bra{\sum_{i=1}^p \xi_{i}} \le C (T \sqrt{\epsilon})^p,
\end{equation}
where the symbol $\sum^\star$ denotes the summation restricted upon
\[ \text{$\xi_1, \ldots, \xi_p \in \Z^d \setminus \brag{0}$ such that $\sum_{i=1}^j \xi_i \neq 0$ for every $j=1, \ldots, p-1$.}\]
Indeed, once \eqref{eq:key-step} is established for every $p \in \mathbb{N}$, what remains to bound in the summation \eqref{eq:general-sum-no-permutation} are all the contributions due to $\xi_1, \ldots, \xi_p \in \Z^d\setminus \brag{0}$ such that, for some $j \in \brag{1,\ldots, p-1}$, one has $\sum_{i=1}^j \xi_i = 0$. By grouping them according to be the smallest such index $j$, that we denote by $q$, we bound from above the corresponding contributions in \eqref{eq:general-sum-no-permutation} as the product
\[ \begin{split} & T \sum_{\xi_1, \ldots, \xi_{q} \in Z^d\setminus\brag{0}}^\star \prod_{i=1}^{j} \frac{\exp\bra{- \epsilon |\xi_{i}|^2/2} }{|\xi_{i}|}  \prod_{j=1}^{q-1}  \frac{1}{| \sum_{i=1}^j \xi_{i} |^{1/H}} \delta_0\bra{\sum_{i=1}^q \xi_{i}} \cdot\\
&  \quad \cdot  T \sum_{\xi_{q+1}, \ldots, \xi_p \in \Z^d\setminus \brag{0}}   \prod_{i={q+1}}^p\frac{\exp\bra{- \epsilon |\xi_{i}|^2/2} }{|\xi_i|}  \prod_{j={q+1}}^{p-1} \min\brag{ \frac{1}{| \sum_{i=q+1}^j \xi_{i} |^{1/H}}, T} \delta_0\bra{\sum_{i=q+1}^p \xi_{i}}\\
& \quad \le C (T\sqrt{\epsilon})^{q} C (T\sqrt{\epsilon})^{p-q} 
\end{split} \]
where the last inequality follows from \eqref{eq:key-step} and the inductive assumption for the validity of \eqref{eq:general-sum-no-permutation} for every $p-q <p$.

\medskip
{\sc Step 3.} Proof of \eqref{eq:key-step}. Note that the conditions on the summation  yield $|\xi_i| \ge (|\xi_i|+ 1)/2$ as well as $|\sum_{i=1}^{j} \xi_i| \ge (|\sum_{i=1}^{j} \xi_i|+1)/2$, hence, for some constant $C = C(p)>0$, 
\[ \begin{split} & \sum_{\xi_1, \ldots, \xi_p}^\star   \prod_{i=1}^p\frac{\exp\bra{- \epsilon |\xi_i|^2/2} }{|\xi_i|}  \prod_{j=1}^{p-1}  \frac{1}{| \sum_{i=1}^j \xi_i |^{1/H}} \delta_0\bra{\sum_{i=1}^p \xi_i} \\
&  \quad \le C\sum_{\xi_1, \ldots, \xi_p \in \Z^d}   \prod_{i=1}^p\frac{\exp\bra{- \epsilon |\xi_i|^2/2} }{|\xi_i|+1}  \prod_{j=1}^{p-1}  \frac{1}{(| \sum_{i=1}^j \xi_i |+1)^{1/H}} \delta_0\bra{\sum_{i=1}^p \xi_i}\\ 
& \quad \le C\sum_{\xi_1, \ldots, \xi_p \in \Z^d}   \prod_{i=1}^p\frac{\exp\bra{- \epsilon |\xi_i|^2/C} }{|\xi_i|+1}  \prod_{j=1}^{p-1}  \frac{\exp\bra{- \epsilon |\sum_{i=1}^j \xi_i|^2/C} }{(| \sum_{i=1}^j \xi_i |+1)^{1/H}} \delta_0\bra{\sum_{i=1}^p \xi_i},\end{split}\]
where we also used the fact that $|\sum_{i=1}^j \xi_i|^2 \le j \sum_{i=1}^j |\xi_i|^2$, so that 
\[ \prod_{i=1}^p \exp\bra{- \epsilon |\xi_i|^2/2} \le\prod_{i=1}^p \exp\bra{- \epsilon |\xi_i|^2/C} \prod_{j=1}^{p-1} \exp\bra{- \epsilon \abs{ \sum_{i=1}^j \xi_i}^2/C} , \]
for some constant $C=C(p,H)>0$. We introduce the function  from Lemma~\ref{lem:young} (with $\epsilon/C$ instead of $\epsilon$)
\[ g(\xi) = \frac{ \exp\bra{ - \epsilon |\xi|^2/C}}{|\xi|+1},\]
so that the last line above can be rewritten, up to a constant $C>0$,
\begin{equation}\label{eq:ready-for-young} \begin{split}&  \sum_{\xi_1, \ldots, \xi_p \in \Z^d} \prod_{i=1}^p g(\xi_i) \prod_{j=1}^{p-1}  g \bra{ \sum_{i=1}^j \xi_i}^{1/H}  \delta_0\bra{\sum_{i=1}^p \xi_i}\\ 
& \quad =  \sum_{\xi_1, \ldots, \xi_{p-1} \in \Z^d} g(\xi_1)^{1+1/H} \bra{ \prod_{i=2}^{p-1} g(\xi_i)} \prod_{j=2}^{p-2}  g \bra{ \sum_{i=1}^j \xi_i}^{1/H} g\bra{\sum_{i=1}^{p-1} \xi_i}^{1+1/H},\end{split}\end{equation}
where we used the fact that  $\xi_p  = - \sum_{i=1}^{p-1} \xi_i$ and $g$ is even (notice also that the smallest  case we need to discuss is $p=3$, since $p=2$ is already covered by Proposition~\ref{prop:upper-bound-p-1-torus}).
We are now in a position to apply Lemma~\ref{lem:young} with $f_1=g^{1+1/H}$, $f_2 = \ldots = f_{p-1}=g$, $F_2 = \ldots = F_{p-2} = g^{1/H}$ and $F_{p-1} = g^{1+1/H}$. To this end we analyze the different cases separately and show that, choosing $\epsilon$ as in \eqref{eq:choice-epsilon}, \eqref{eq:ready-for-young} is bounded from above by $T^{-1} (T \sqrt{\epsilon})^p$, which in turn would imply \eqref{eq:general-sum-no-permutation}.

\medskip
{\sc Step 3.1}. Case study. 

{\sc Case $d=2+1/H$} We may choose the exponents
\[ 
\lambda_1 = 1, \quad \lambda_2 = d, \quad \Lambda_2 = \frac{d}{d-1},
\]
and
\[ 
\lambda_k = d, \quad \Lambda_k = \frac{d}{d-1} \quad \mbox{if}\; 3\le 
k\le p-1,
\]
so that \eqref{eq:lemyoungcond} is satisfied. Hence we obtain that \eqref{eq:ready-for-young} is bounded from above by the product
\begin{equation}\label{eq:final?} \norm{g^{1+1/H}}_{\ell^1} \bra{ \prod_{i=2}^{p-1} \norm{g}_{\ell^{d}} } \prod_{j=2}^{p-2} \norm{g^{1/H}}_{\ell^{d/(d-1)}} \norm{g^{1+1/H}}_{\ell^{d/(d-1)}}.\end{equation}
Using repeatedly Lemma~\ref{lem:g} to bound all these norms, we conclude that \eqref{eq:key-step} holds. Indeed, since $d=1/H + 2$, then $1+1/H = d-1$ and 
\begin{equation}
\begin{split}
 \norm{g^{1+1/H}}_{\ell^1} & = \norm{g}_{\ell^{1+1/H}}^{1+1/H} \lesssim \epsilon^{-\frac 1 2 \bra{ d/(1+1/H) - 1 }(1+1/H)} = \epsilon^{-\frac{1}{2}},\\
  \norm{g}_{\ell^d} & \lesssim |\log \epsilon|^{1/d},\\
 \norm{g^{1/H}}_{\ell^{d/(d-1)}} & = \norm{g}_{\ell^{d(d-2)/(d-1)}}^{d-2} \lesssim  \epsilon^{-\frac 1 2 \bra{ (d-1)/(d-2) - 1 }(d-2)} = \epsilon^{-\frac 1 2},\\
 \norm{g^{1+1/H}}_{\ell^{d/(d-1)}} & = \norm{g}_{\ell^{d(1+1/H)/(d-1)}}^{1+1/H} \lesssim |\log \epsilon|^{1-1/d},
 \end{split}
\end{equation}
so that, collecting all the terms \eqref{eq:final?} is bounded from above by
\[ \epsilon^{-\frac 12 (p-2)} |\log \epsilon|^{1+(p-3)/d} \lesssim  T^{-\frac 1 2 (p-2)} \bra{ \log T}^{ \frac {p-3} d + 2 - \frac p 2} \lesssim T^{-1} (T \sqrt{\epsilon})^{p}. \]

\medskip
{\sc  Case $d>2+1/H$}. We may choose the subcritical exponent $\bar{d} < d$ so that $1+1/H<\bar{d}-1<d$, $d(1+1/H)/(d-1)<d$ as well as $1/H<\bar{d}-2<d$. Hence by the choice of the exponents
\[ 
\lambda_1 = 1, \quad \lambda_2 = \bar{d}, \quad \Lambda_2 = \frac{\bar{d}}{\bar{d}-1},
\]
and
\[ 
\lambda_k = \bar{d}, \quad \Lambda_k = \frac{\bar{d}}{\bar{d}-1} \quad \mbox{if}\; 3\le 
k\le p-1,
\]
\eqref{eq:lemyoungcond} is again satisfied. Thus we can bound from above \eqref{eq:ready-for-young} by the product
\begin{equation}\label{eq:final?d>1/H+2} \norm{g^{1+1/H}}_{\ell^1} \bra{ \prod_{i=2}^{p-1} \norm{g}_{\ell^{\bar{d}}} } \prod_{j=2}^{p-2} \norm{g^{1/H}}_{\ell^{\bar{d}/(\bar{d}-1)}} \norm{g^{1+1/H}}_{\ell^{\bar{d}/(\bar{d}-1)}}.
\end{equation}
Again using repeatedly Lemma~\ref{lem:g} we can bound all these norms. Indeed, considering them separately yields
\begin{equation}
\begin{split}
 \norm{g^{1+1/H}}_{\ell^1} & = \norm{g}_{\ell^{1+1/H}}^{1+1/H} \lesssim \epsilon^{-\frac 1 2 \bra{ d/(1+1/H) - 1 }(1+1/H)} = \epsilon^{-\frac{1}{2}\bra{ d-1-1/H}},\\
  \norm{g}_{\ell^{\bar{d}}} & \lesssim \epsilon^{- \frac12 \bra{d/\bar{d}-1}},\\
 \norm{g^{1/H}}_{\ell^{\bar{d}/(\bar{d}-1)}} & = \norm{g}_{\ell^{\bar{d}/(H(\bar{d}-1))}}^{1/H} \lesssim  \epsilon^{-\frac 1 2 \bra{ Hd(\bar{d}-1)/\bar{d} - 1 }/H} = \epsilon^{-\frac 1 2\bra{d\bra{1-1/\bar{d}}-1/H}},\\ 
 \norm{g^{1+1/H}}_{\ell^{\bar{d}/(\bar{d}-1)}} & = \norm{g}_{\ell^{\bar{d}(1+1/H)/(\bar{d}-1)}}^{1+1/H} \lesssim \epsilon^{-\frac 1 2 \bra{ d(\bar{d}-1)/(\bar{d}(1+1/H)) - 1 }(1+1/H)} \\
 & = \epsilon^{-\frac 1 2\bra{d\bra{1-1/\bar{d}}-1-1/H}},
 \end{split}
\end{equation}
so that, collecting all the terms \eqref{eq:final?d>1/H+2} is bounded from above by
\[ \epsilon^{-\frac 12 \bra{(p-1)(d-1/H) - p}} \lesssim  T^{p-1} \cdot T^{- \frac{p}{d-1/H}} \sim T^{-1} \bra{T \sqrt{\epsilon}}^p. \]

\medskip
{\sc Case $d<2+1/H$}. We argue that there exists $1<\bar{d}<d$ such that 
\begin{equation}\label{eq:choicebardsmalldim}
\frac{\bar{d}}{\bar{d}-1}\bra{1+\frac1H}>\frac{\bar{d}}{\bar{d}-1}\frac1H>d.
\end{equation}
Indeed, the first inequality is trivial since $1+1/H>1/H$, while the second one is always satisfied if $d \le 1/H$. %\textcolor{blue}{FM: shall we remove this? since
%\[
%\frac{\bar{d}}{\bar{d}-1} \frac1H \ge \frac{\bar{d}}{\bar{d}-1} d > d.
%\]
%}
If $d>1/H$ the second inequality is equivalent to 
$
\bar{d} <  d/ (d-1/H).
$
Moreover, note that since $d<2+1/H$ we have
$
 d/ (d-1/H) >  d/2,
$
thus choosing $\bar d$ such that 
\begin{align*}
& \max\brag{1,\frac d 2} < \bar{d} < d & \text{if $d\le\frac1H$}\\
& \max\brag{1,\frac d 2} < \bar{d} < \frac d {d-\frac1H} & \text{if $d>\frac1H$}
\end{align*}
\eqref{eq:choicebardsmalldim} is satisfied. Given $\bar{d}$ satisfying \eqref{eq:choicebardsmalldim}, to bound \eqref{eq:ready-for-young} we may choose the exponents 
\[ 
\lambda_1 = 1, \quad \lambda_2 = \bar{d}, \quad \Lambda_2 = \frac{\bar{d}}{\bar{d}-1},
\]
and
\[ 
\lambda_k = \bar{d}, \quad \Lambda_k = \frac{\bar{d}}{\bar{d}-1} \quad \mbox{if}\; 3\le 
k\le p-1,
\]
so that again \eqref{eq:lemyoungcond} is satisfied. Thus \eqref{eq:ready-for-young} is bounded from above by the product
\begin{equation}\label{eq:final?d<1/H+2} \norm{g^{1+1/H}}_{\ell^1} \bra{ \prod_{i=2}^{p-1} \norm{g}_{\ell^{\bar{d}}} } \prod_{j=2}^{p-2} \norm{g^{1/H}}_{\ell^{\bar{d}/(\bar{d}-1)}} \norm{g^{1+1/H}}_{\ell^{\bar{d}/(\bar{d}-1)}}.
\end{equation}
In this case we need to do an additional distinction between the cases $d<1+1/H, d=1+1/H$ and $1+1/H<d<2+1/H$. Indeed the first factor of \eqref{eq:final?d<1/H+2} might give a diverging term depending on $d$. 

If $d<1+1/H$ an application of Lemma~\ref{lem:g} yields the upper bounds
\begin{equation*}
\begin{split}
 \norm{g^{1+1/H}}_{\ell^1} & = \norm{g}_{\ell^{1+1/H}}^{1+1/H} \lesssim 1,\\
  \norm{g}_{\ell^{\bar{d}}} & \lesssim \epsilon^{- \frac12 \bra{d/\bar{d}-1}},\\
 \norm{g^{1/H}}_{\ell^{\bar{d}/(\bar{d}-1)}} & = \norm{g}_{\ell^{\bar{d}/(H(\bar{d}-1))}}^{1/H} \lesssim  1,\\ 
 \norm{g^{1+1/H}}_{\ell^{\bar{d}/(\bar{d}-1)}} & = \norm{g}_{\ell^{\bar{d}(1+1/H)/(\bar{d}-1)}}^{1+1/H} \lesssim 1,
 \end{split}
\end{equation*}
so that, collecting all the terms \eqref{eq:final?d<1/H+2} is bounded from above by
\[ 
\epsilon^{-\frac 12 \bra{\bra{\frac d{\bar{d}}-1}\bra{p-2}}} \lesssim  T^{\frac 12 \bra{\bra{\frac d{\bar{d}}-1}\bra{p-2}}} \lesssim T^{\frac p 2 - 1}\sim T^{-1} \bra{T \sqrt{\epsilon}}^p. 
\]

If $d>1+1/H$ an application of Lemma~\ref{lem:g} yields the upper bounds
\[
\begin{split}
 \norm{g^{1+1/H}}_{\ell^1} & = \norm{g}_{\ell^{1+1/H}}^{1+1/H} \lesssim \epsilon^{-\frac12\bra{d-1-\frac1H}},\\
  \norm{g}_{\ell^{\bar{d}}} & \lesssim \epsilon^{- \frac12 \bra{d/\bar{d}-1}},\\
 \norm{g^{1/H}}_{\ell^{\bar{d}/(\bar{d}-1)}} & = \norm{g}_{\ell^{\bar{d}/(H(\bar{d}-1))}}^{1/H} \lesssim  1,\\ 
 \norm{g^{1+1/H}}_{\ell^{\bar{d}/(\bar{d}-1)}} & = \norm{g}_{\ell^{\bar{d}(1+1/H)/(\bar{d}-1)}}^{1+1/H} \lesssim 1,
 \end{split}
\]
so that collecting all the terms \eqref{eq:final?d<1/H+2} is bounded from above by
\[ 
\epsilon^{-\frac 12 \bra{\bra{\frac d{\bar{d}}-1}\bra{p-2}+ d - 1-\frac1H}} \lesssim  T^{\frac 12 \bra{\bra{\frac d{\bar{d}}-1}\bra{p-2}+d-1-\frac1H}} \lesssim T^{\frac p 2 - 1}\sim T^{-1} \bra{T \sqrt{\epsilon}}^p, 
\]
where the last inequality is true if
\[
\bra{\frac d{\bar{d}}-1}\bra{p-2}+d-1-\frac1H < p-2,
\]
which is satisfied if $d>1+1/H$.
%\textcolor{blue}{FM: being more specific? i.e. and say that: that is equivalent to 
%\[
%\bar{d} > \frac{d(p-2)}{2(p-2)-d+1+1/H}.
%\]
%The latter is satisfied for instance if 
%\[
%\frac{d(p-2)}{2(p-2)-d+1+1/H} > \frac d 2,
%\]
%or equivalently
%\[
%d>1+\frac1H.
%\]
%}

Finally if $d=1+1/H$ an application of Lemma~\ref{lem:g} yields the upper bounds
\[
\begin{split}
 \norm{g^{1+1/H}}_{\ell^1} & = \norm{g}_{\ell^{1+1/H}}^{1+1/H} \lesssim \log \epsilon,\\
  \norm{g}_{\ell^{\bar{d}}} & \lesssim \epsilon^{- \frac12 \bra{d/\bar{d}-1}},\\
 \norm{g^{1/H}}_{\ell^{\bar{d}/(\bar{d}-1)}} & = \norm{g}_{\ell^{\bar{d}/(H(\bar{d}-1))}}^{1/H} \lesssim  1,\\ 
 \norm{g^{1+1/H}}_{\ell^{\bar{d}/(\bar{d}-1)}} & = \norm{g}_{\ell^{\bar{d}(1+1/H)/(\bar{d}-1)}}^{1+1/H} \lesssim 1,
 \end{split}
\]
so that collecting all the terms \eqref{eq:final?d<1/H+2} is bounded from above by
\[ 
\epsilon^{-\frac 12 {\bra{\frac d{\bar{d}}-1}\bra{p-2}}} \log \epsilon \lesssim  T^{\frac 12 {\bra{\frac d{\bar{d}}-1}\bra{p-2}}} \lesssim T^{\frac p 2 - 1}\sim T^{-1} \bra{T \sqrt{\epsilon}}^p, 
\]
where the last inequality is satisfied if $\bar{d} >  d /2.$
\end{proof}

\subsection{Conclusion}

We complete the proof of Theorem~\ref{thm:main-Td} with the next proposition.

\begin{proposition}\label{prop:lower-bound}
Let $\epsilon =  \epsilon(d,H,T)>0$ be as in \eqref{eq:choice-epsilon}. Then, for every $T \ge 0$ sufficiently large,
\begin{equation}\label{eq:lower-W1-empirical}
\Ex\braq{ W_1( \mu_T, T) } \gtrsim  T \sqrt{\epsilon},
\end{equation}
and, for every $p \ge 1$,% for some constant $C = C(d,H,p)>0$\footnote{Here we need $X = \mathbb{T}^d$ to apply [Thm 2] Ledoux Gaussian matching},
\begin{equation}\label{eq:upper-Wp-empirical}
\Ex\braq{ W_p^p( \mu_T, T) } \lesssim C T (\sqrt{\epsilon})^p.
\end{equation}
\end{proposition}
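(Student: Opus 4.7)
The plan is to combine the PDE transport bounds of Lemma~\ref{lem:general-transport-pde-bound} with the second- and fourth-moment estimates for $\nabla u_\epsilon$ already at our disposal. The lower bound \eqref{eq:lower-W1-empirical} will come from the Paley--Zygmund-type inequality \eqref{eq:lower-p}, while the upper bound \eqref{eq:upper-Wp-empirical} will come from \eqref{eq:wasserstein-p>1} when $p$ is an even integer, and will be extended to all $p \ge 1$ by the monotonicity inequality \eqref{eq:monotonicity} together with Jensen.

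For the lower bound, I would apply \eqref{eq:lower-p} to $\mu = \mu_T$ and $\nu = T\mathscr{L}^d_{\T^d}$, take the expectation and swap the supremum with the expectation (which can only decrease the right-hand side), and then evaluate at $M = \lambda T\sqrt{\epsilon}$ for a parameter $\lambda$ to be fixed later. The Plancherel identity used in the proof of Proposition~\ref{prop:upper-bound-p-1-torus}, together with the \emph{lower} bound of Lemma~\ref{lem:second-moment-continuous}, actually produces a matching lower estimate $\Ex\braq{\norm{\nabla u_\epsilon}_{L^2}^2} \gtrsim (T\sqrt{\epsilon})^2$ for the calibration \eqref{eq:choice-epsilon}, while Proposition~\ref{prop:p-moments-torus} with $p=4$ yields $\Ex\braq{\norm{\nabla u_\epsilon}_{L^4}^4} \lesssim (T\sqrt{\epsilon})^4$. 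Inserting these two bounds into \eqref{eq:lower-p} gives
\[
\Ex\braq{W_1(\mu_T, T)} \gtrsim \frac{c_1}{\lambda}\, T\sqrt{\epsilon} - \frac{c_2}{\lambda^3}\, T\sqrt{\epsilon},
\]
so picking $\lambda$ large enough that the second term is at most half the first concludes.

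For the upper bound, I would first treat the case of even integers $p$. Applying \eqref{eq:wasserstein-p>1} with $\mu_T(\T^d) = T$, taking expectation, and invoking Proposition~\ref{prop:p-moments-torus} gives
\[
\Ex\braq{W_p^p(\mu_T, T\mathscr{L}^d_{\T^d})} \lesssim T\epsilon^{p/2} + T^{1-p}\, \Ex\braq{\norm{\nabla u_\epsilon}_{L^p}^p} \lesssim T(\sqrt{\epsilon})^p.
\]
For general $p \ge 1$ I would fix the smallest even integer $q \ge p$ and use \eqref{eq:monotonicity} together with Jensen's inequality (since $x \mapsto x^{p/q}$ is concave on $[0,\infty)$):
\[
\Ex\braq{W_p^p(\mu_T, T)} \le T^{1-p/q}\, \Ex\braq{\bra{W_q^q(\mu_T, T)}^{p/q}} \le T^{1-p/q} \bra{\Ex\braq{W_q^q(\mu_T, T)}}^{p/q} \lesssim T(\sqrt{\epsilon})^p,
\]
invoking the even-case bound just established.

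The only delicate point is in the lower bound: the Paley--Zygmund choice $M \sim T\sqrt{\epsilon}$ closes precisely because the calibration \eqref{eq:choice-epsilon} was designed so that $\Ex\braq{\norm{\nabla u_\epsilon}_{L^2}^2}$ and $\Ex\braq{\norm{\nabla u_\epsilon}_{L^4}^4}^{1/2}$ share the same order $(T\sqrt{\epsilon})^2$. If the fourth-moment estimate were off by any polynomial factor, the subtraction would fail to be positive, so the full strength of Proposition~\ref{prop:p-moments-torus}, and not merely a Cauchy--Schwarz consequence of the second moment, is genuinely needed here.
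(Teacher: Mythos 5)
Your proof is correct and follows essentially the same argument as the paper: the lower bound comes from \eqref{eq:lower-p} with the two moment estimates and the choice $M \sim T\sqrt{\epsilon}$, while the upper bound is obtained for even $p$ via \eqref{eq:wasserstein-p>1} and Proposition~\ref{prop:p-moments-torus}, then extended to all $p \ge 1$ by \eqref{eq:monotonicity}. Your interpolation step via Jensen's inequality for the non-even case is a helpful explicit spelling-out of what the paper compresses into the phrase ``by the H\"older inequality and \eqref{eq:monotonicity}, it is sufficient to argue in the case $p\in\mathbb{N}$ even,'' and your closing remark on why the fourth-moment bound from Proposition~\ref{prop:p-moments-torus} cannot be weakened is an accurate reading of why the calibration \eqref{eq:choice-epsilon} is crucial.
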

\begin{proof}
By \eqref{eq:lower-p} we may write
\[
\begin{split}
\Ex\braq{ W_1( \mu_T, T) } & \ge \frac1M \Ex \braq{\| \nabla u_\epsilon \|_{L^2}^2} - \frac C {M^3} \Ex \braq{\| \nabla u_\epsilon\|_{L^4}^4} \\
& \stackrel{\eqref{eq:asymptotics-L2-gradient}, \eqref{eq:p-moment-gradient}}{\gtrsim}  \frac{1}{M} \bra{T \sqrt{\epsilon}}^2 - \frac{C}{M^3} \bra{T \sqrt{\epsilon}}^4.
\end{split}
\]
Finally, choosing $M \sim T \sqrt{\epsilon}$ yields \eqref{eq:lower-W1-empirical}.

\medskip
Let us turn to \eqref{eq:upper-Wp-empirical}. By the H\"older inequality and  \eqref{eq:monotonicity}, it is sufficient to argue in the case $p\in \mathbb{N}$ even, so that we may use \eqref{eq:upper-p} and \eqref{eq:p-moment-gradient} to obtain
\[
\begin{split}
\Ex\braq{ W_p^p( \mu_T, T) } & \lesssim T \bra{\sqrt{\epsilon}}^p + T^{1-p} \| \nabla u_\varepsilon\|_{L^p}^p \lesssim T \bra{\sqrt{\epsilon}}^p,
\end{split}
\]
which concludes the proof.
\end{proof}

\section{A lower bound for fBm's on $\R^d$}\label{sec:corollary}

Our arguments presently do not follow through in the non-compact case $\R^d$, except if heavily modified, e.g., by localizing on a large ball as $T$ increases, and anyway they yield asymptotic bounds that do not seem to be sharp. However, Theorem~\ref{thm:main-Td} combined with a convexity argument gives a straightforward lower bound for the expected distance between the empirical measures of two independent fBm's on $\R^d$. 

\begin{corollary}\label{cor:two-paths}
Let $B^{H_1} = \bra{B_t^{H_1}}_{t \ge 0}$, $B^{H_2}= \bra{B_t^{H_2}}_{t \ge 0}$ be independent fractional Brownian motions on $\R^d$, with Hurst indexes $H_1, H_2 \in \bra{0,1}$ and assume that $H_1\ge H_2$. Then, for every $p\ge 1$, as $T \rightarrow \infty$,
\begin{equation}\label{eq:rates-corollary}
 \Ex \braq{W_p^p \bra{\int_0^T \delta_{B^{H_1}_s} \,\d s , \int_0^T \delta_{B^{H_2}_s} \,\d s}} \gtrsim T  \cdot  \begin{cases}
T^{-p/2} & \text{if}\ d < \frac{1}{H_1} + 2, \\
\bra{\log T/T}^{p/2} & \text{if}\ d = \frac{1}{H_1} + 2, \\
T^{-\frac{p}{d-1/H_1} } & \text{if}\ d > \frac{1}{H_1} + 2.
\end{cases} 
\end{equation}
\end{corollary}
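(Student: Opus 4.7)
The plan is to reduce the statement to the torus bound of Theorem~\ref{thm:main-Td} via two preliminary reductions followed by a convexity argument. First, since the canonical projection $\pi\colon \R^d \to \T^d$ is a contraction ($\dT(\pi(x), \pi(y)) \le |x-y|$), pushing forward any coupling yields
\[
\Ex\braq{W_p^p\bra{\int_0^T \delta_{B^{H_1}_s}\,\d s, \int_0^T \delta_{B^{H_2}_s}\,\d s}} \ge \Ex\braq{W_p^p\bra{\mu^{H_1}_T, \mu^{H_2}_T}},
\]
where $\mu^{H_i}_T := \int_0^T \delta_{\pi(B^{H_i}_s)}\,\d s$ is the occupation measure of a fractional Brownian motion on $\T^d$ with Hurst index $H_i$. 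Second, applying \eqref{eq:monotonicity} with exponents $1$ and $p$ gives $W_p^p \ge T^{1-p}W_1^p$, and Jensen's inequality gives $\Ex\braq{W_1^p} \ge \Ex\braq{W_1}^p$. A direct inspection of \eqref{eq:rates-main-theorem} shows that $r_1(H_1, d, T)^p = r_p(H_1, d, T)$ in each of the three regimes (writing $r_p$ for the right-hand side of \eqref{eq:rates-main-theorem}), so the general-$p$ statement reduces to the $p=1$ claim $\Ex\braq{W_1\bra{\mu^{H_1}_T, \mu^{H_2}_T}} \gtrsim T \cdot r_1(H_1, d, T)$.

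For $p=1$, I would condition on $B^{H_1}$ and use that $\nu \mapsto W_1(\mu, \nu)$ is convex, being a supremum of linear functionals in $\nu$ by Kantorovich--Rubinstein \eqref{eq:kantorovich}. Jensen's inequality then yields
\[
\Ex\braq{W_1\bra{\mu^{H_1}_T, \mu^{H_2}_T}} \ge \Ex\braq{W_1\bra{\mu^{H_1}_T, \bar\nu_T}}, \qquad \bar\nu_T := \Ex\braq{\mu^{H_2}_T} = \int_0^T P_{s^{2H_2}}\delta_0\,\d s,
\]
where $P_t$ is the heat semigroup on $\T^d$ (using that $\pi(B^{H_2}_s)$ has law $P_{s^{2H_2}}\delta_0$). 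The key deterministic estimate is that $W_1(\bar\nu_T, T\mathscr{L}^d_{\T^d}) \le C$ for some $C = C(d, H_2)$ uniform in $T$. This follows from \eqref{eq:plancherelidentity} and \eqref{eq:fourier-heat}, which give $\|P_t\delta_0 - 1\|_{L^2(\T^d)}^2 = \sum_{\xi \neq 0}e^{-4\pi^2 t|\xi|^2}$, decaying exponentially in $t$; together with the trivial $L^1$ bound $\|P_t\delta_0 - 1\|_{L^1} \le 2$ this yields $\int_0^\infty \|P_{s^{2H_2}}\delta_0 - 1\|_{L^1}\,\d s \le C(d, H_2)$, controlling $\|\bar\nu_T - T\mathscr{L}^d_{\T^d}\|_{\mathrm{TV}}$ and hence (by bounded diameter of $\T^d$) the $W_1$ distance.

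Combining the reverse triangle inequality, the deterministic bound, and Theorem~\ref{thm:main-Td} for $p=1$ gives
\[
\Ex\braq{W_1\bra{\mu^{H_1}_T, \bar\nu_T}} \ge \Ex\braq{W_1\bra{\mu^{H_1}_T, T\mathscr{L}^d_{\T^d}}} - C \gtrsim T \cdot r_1(H_1, d, T) - C \gtrsim T \cdot r_1(H_1, d, T),
\]
where the last step uses that $T \cdot r_1(H_1, d, T) \to \infty$ in every regime (the slowest being $T^{1/2}$ in the subcritical one). The main conceptual point is the Jensen step, which is essential when $H_1 = H_2$: a direct triangle-inequality comparison with the two one-path bounds would fail there, since the two single-path rates coincide. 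The only step requiring genuine computation is the uniform-in-$T$ bound on $\|\bar\nu_T - T\mathscr{L}^d_{\T^d}\|_{\mathrm{TV}}$, which amounts to standard heat-kernel analysis on the torus.
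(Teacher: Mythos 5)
Your argument is correct and reaches the same conclusion by a genuinely different route. Both proofs share the essential convexity/Jensen step (conditioning on $B^{H_1}$ and replacing $\int_0^T \delta_{B^{H_2}_s}\,\d s$ by its mean $\bar\nu_T$) and the projection to $\T^d$. Where you diverge is in how you dispose of the ``nuisance'' measure $\bar\nu_T$. The paper stays with general $p$, splits $W_p^p(\mu^{H_1}_T,\bar\nu_T)\gtrsim W_p^p(\mu^{H_1}_T,T)-W_p^p(\bar\nu_T,T)$ via $(x+y)^p\le 2^{p-1}(x^p+y^p)$, and then proves the explicit decay estimate \eqref{eq:claim-small} for $W_p^p(\bar\nu_T,T)$ using Lemma~\ref{lem:general-transport-pde-bound} and the Hausdorff--Young inequality \eqref{eq:hausdorff-young} with a carefully tuned $\epsilon$. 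You instead first reduce to $p=1$, observing that \eqref{eq:monotonicity} gives $W_p^p\ge T^{1-p}W_1^p$, that Jensen gives $\Ex[W_1^p]\ge(\Ex W_1)^p$, and that the target rates satisfy $T\cdot r_p = T^{1-p}(T\cdot r_1)^p$ in all three regimes; after this the only thing you need about $\bar\nu_T$ is the crude uniform bound $W_1(\bar\nu_T, T\mathscr{L}^d_{\T^d})=O(1)$, which you get from exponential heat-kernel decay on the torus, the $L^1$--$L^2$ comparison, total variation, and bounded diameter. Your route is more elementary and avoids the Hausdorff--Young machinery entirely; the trade-off is that the paper's argument produces the quantitatively sharper estimate \eqref{eq:claim-small}, which could be reused, whereas your $O(1)$ bound on $W_1(\bar\nu_T,T)$ is just barely enough (relying on $T\cdot r_1\to\infty$ in every regime, which you correctly check). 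Both are valid; yours is arguably cleaner for the purpose of proving exactly this corollary.
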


\begin{proof}
Denote by $\Ex_1\braq{ \cdot }$, $\Ex_{2}\braq{\cdot }$ respectively expectation with respect to $B^{H_1}$ and $B^{H_2}$, so that $\Ex\braq{\cdot} = \Ex_1 \braq{ \Ex_2\braq{\cdot}}$. Notice first that, by convexity, % \eqref{eq:convexity}, we have % {\color{blue} Here we implicitly fix the starting value to 0, as we do in the rest of the article without really mentioning it.}
\[ \begin{split} \Ex_2\braq{ W_p^p\bra{ \int_0^T \delta_{B^{H_1}_s} \,\d s , \int_0^T \delta_{B^{H_2}_s} \,\d s}} & \ge W_p^p \bra{ \int_0^T \delta_{B^{H_1}_s} \,\d s , \int_0^T \Ex_2 \braq{ \delta_{B^{H_2}_s}} \,\d s}\\
& = W_p^p \bra{ \int_0^T \delta_{B^{H_1}_s} \,\d s , \int_0^T \mathcal{N}(0, s^{2H_2}) \,\d s}.
\end{split}\]
Denote by $\pr: \R^d \to \T^d$ the projection map, which is $1$-Lipschitz,
$$ \dT( \pr(x), \pr(y) ) \le |x-y|,$$
so that, for measures $\mu$, $\nu$ on $\R^d$, it holds
$$  W_{p}(  \mu, \nu) \ge W_{p,\T^d}( \pr_\sharp \mu, \pr_\sharp \nu),$$
where $\pr_\sharp \mu (A) = \mu( \pr^{-1}(A))$ denotes the push-forward of $\mu$ by $\pr$. We apply it to $\mu =   \int_0^T \delta_{B^{H_1}_s}\,\d s$, for which $\pr_\sharp \mu = \int_0^T \delta_{\pr(B^{H_1}_s)}\,\d s$, and $\nu = \int_0^T \mathcal{N}(0, s^{2H_2}) \,\d s$, hence
$$ W_p^p \bra{ \int_0^T \delta_{B^{H_1}_s} \,\d s , \int_0^T \mathcal{N}(0, s^{2H_2}) \,\d s} \ge W_{p, \T^d}^p \bra{ \int_0^T \delta_{\pr(B^{H_1}_s)} \,\d s , \int_0^T \pr_\sharp \mathcal{N}(0, s^{2H_2}) \,\d s},$$
where $W_{p,\mathbb{T}^d}$ denotes the Wasserstein distance on the torus $\mathbb{T}^d$. We then use the triangle inequality combined with the inequality $(x+y)^p\leq 2^{p-1}(x^p+y^p)$, to bound
\[ \begin{split}  W^p_{p,\T^d}\bra{ \int_0^T \delta_{\pr(B^{H_1}_s)} \,\d s, \int_0^T \pr_\sharp \mathcal{N}(0, s^{2H_2}) \,\d s} & \gtrsim    W_{p, \T^d}^p \bra{ \int_0^T \delta_{\pr(B^{H_1}_s)} \,\d s , T } \\
&  \quad -  W_{p,\T^d}^p\bra{ \int_0^T\pr_\sharp \mathcal{N}(0, s^{2H_2})\,\d s, T} .\end{split}\]
We claim that, writing $q = \min\brag{2, p/(p-1)}$, one has the bounds:
\begin{equation}\label{eq:claim-small}  W_{p,\T^d}^p\bra{ \int_0^T\pr_\sharp \mathcal{N}(0, s^{2H_2})\,\d s, T }  \lesssim T  \cdot  \begin{cases}
T^{-p} & \text{if}\ d < q\bra{ \frac{1}{H_2} + 1}, \\
(\log T)^{p-1}/T^p & \text{if}\ d = q\bra{ \frac{1}{H_2} + 1}, \\
T^{-\frac{p}{d/q-1/H_2} } & \text{if}\ d > q \bra{ \frac{1}{H_2} + 1}.
\end{cases}  \end{equation}
In particular, the right hand side above is always infinitesimal
%\footnote{I am confident but it needs an independent check here :); MH: I think there is a problem with large p and d... } 
with respect to that in \eqref{eq:rates-corollary}, therefore, taking expectation with respect to  $\Ex_1\braq{\cdot}$ the thesis follows from the lower bounds in \eqref{eq:rates-main-theorem}. 

\medskip
To prove \eqref{eq:claim-small}, notice first that by monotonicity \eqref{eq:monotonicity}, we can assume that $p\ge 2$, so that $q = p/p-1$ is the dual exponent.
%To simplify notation, let us  write $H=H_2$ in what follows. 
The thesis follows from an application of \eqref{eq:wasserstein-p>1} from Lemma~\ref{lem:general-transport-pde-bound} with $\mu = \int_0^T\pr_\sharp \mathcal{N}(0, s^{2H_2})\,\d s$, $\mu(\T^d) = T = \hat{\mu}(0)$. For every $\xi \in \Z^d\setminus \brag{0}$, 
$$ \hat{\mu}(\xi) = \int_0^T \exp\bra{ -2 \pi^2 |\xi|^2 s^{2H_2}} \,\d s \lesssim \frac{1}{|\xi|^{1/H_2}}.$$
Therefore, letting $u_\epsilon$ be the solution to the Poisson equation $-\Delta u_\epsilon = P_\epsilon(\mu - T)$, by \eqref{eq:gradient-fourier} we have for $\xi \in \Z^d$,
\[ |\hat{u}_{\epsilon}(\xi)| \lesssim \frac{\exp\bra{ - 2 \pi^2 \epsilon |\xi|^2}}{(|\xi|+1)^{2+1/H_2}}\]
and the Hausdorff-Young inequality \eqref{eq:hausdorff-young} entails
%
%We apply \eqref{eq:plancherel-higher} to $f = u_\epsilon$ solving the Poisson equation $-\Delta u_\epsilon = P_\epsilon(\mu - T)$, obtaining
%$$ \norm{\nabla u_\epsilon}_{L^p}^p  \lesssim \sum_{\xi_1, \ldots, \xi_p \in \Z^d \setminus \brag{0}} \prod_{i=1}^p \frac{ \exp\bra{ -2 \pi^2 \epsilon |\xi_i|^2 }}{|\xi_i|^{1+1/H}} \delta_0(\xi_1+ \ldots + \xi_p).  $$
\[ \norm{\nabla u_\epsilon}_{L^p(\T^d)}^p \lesssim \norm{ g}_{L^{q(1+1/H_2)}(\Z^d)}^{p(1+1/H_2)},\]
where we introduce the function  from Lemma~\ref{lem:g} (with $\epsilon/C$ for a suitable constant $C=C(d,p, H)>0$ instead of $\epsilon$),
\[ g(\xi) = \frac{ \exp\bra{ - \epsilon |\xi|^2/C}}{|\xi|+1}.\]
It follows that
\[\begin{split} &  W_{p,\T^d}^p\bra{ \int_0^T\pr_\sharp \mathcal{N}(0, s^{2H_2})\,\d s, T } \\
& \quad \lesssim T (\sqrt{\epsilon})^p+  T^{1-p}  \cdot  \begin{cases}
1 & \text{if}\ d < q\bra{ \frac{1}{H_2} + 1}, \\
|\log \epsilon|^{p-1} & \text{if}\ d = q\bra{ \frac{1}{H_2} + 1}, \\
\epsilon^{-\frac 1 2(d/q - 1 - 1/H)p} & \text{if}\ d > q \bra{ \frac{1}{H_2} + 1}.
\end{cases}
\end{split}
\] 
Letting $\sqrt{\epsilon} = 1/T$ in the first case and $\sqrt{\epsilon} = T^{-\frac{1}{d/q-1/H_2}}$ in the second and third case, we obtain \eqref{eq:claim-small}.
%
%it is sufficient to bound the difference between the densities, i.e., prove that
%$$ \sup_{x \in \T^d} \abs{ \sum_{k \in \Z^d} \frac{ e^{- \frac{|k-x|^2}{2 s^{2H_2}} }}{\sqrt{ (2 \pi)^d s^{2d H_2}}} -1  } \lesssim e^{-cs^{2H_2}}$$
%To this aim we use Poisson summation, so that\footnote{check constants $\pi$! \textcolor{blue}{FM: the constants should be fine}}
%$$ \sum_{k \in \Z^d} \frac{ e^{- \frac{|k-x|^2}{2 s^{2H_2}} }}{\sqrt{ (2 \pi)^d s^{2d H_2}}} = 1 +  \sum_{m \in \Z^d \setminus \brag{0}} e^{ 2 \pi i x  -\frac{ |m|^2 s^{2H_2}}{2}} = 1 + O\bra{e^{-cs^{2H_2}}},$$
%for some constant $c>0$ independent of $s$.
\end{proof}

\section{A result on discrete-time approximation }\label{sec:thm:discrete}

We prove the following variant of Theorem~\ref{thm:main-Td} where we consider a discrete-time approximation of fBm. We limit ourselves to the case $p=1$ for simplicity. 

\begin{theorem}\label{thm:main-discrete}
Let $B^H = \bra{B_t^H}_{t \ge 0}$ be a fractional Brownian motion with Hurst index $H \in \bra{0,1}$ taking values on a $d$-dimensional torus $\mathbb{T}^d$. Let $\alpha > 0$ and for $T \ge 0$, set $\tau \sim  T^{-\alpha}$. Then, as $T \rightarrow \infty$,
\[
 \Ex \braq{W_1 \bra{ \sum_{t = 1}^{\lfloor T/\tau \rfloor} \delta_{B^H_{t \tau}} \tau, \lfloor T/\tau \rfloor \tau }} \lesssim T  \cdot  \begin{cases}
  T^{-1/2} &  \text{if $d \le   2$,} \\
 % T^{-\frac12} & \text{if $d=2$,}\\
  T^{- \min\{ 1/2, \frac{1+\alpha}{d}\}}  & \text{if $2< d< \frac{1}{H} + 2$,} \\
  \max\{ \sqrt{{\log T}/T},   T^{-  \frac{1+\alpha}{d}}\} & \text{if $d = \frac{1}{H} + 2$,}\\
   T^{-\min\{ (d-1/H), \frac{1+\alpha}{d}\}} &  \text{if $d> \frac1 H +2$.}
\end{cases} 
\]
\end{theorem}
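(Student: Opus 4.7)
The plan is to adapt the PDE-based strategy used for Theorem~\ref{thm:main-Td} to the discrete-time measure $\mu_{\tau,T}$. Applying the upper bound \eqref{eq:upper-p} of Lemma~\ref{lem:general-transport-pde-bound} with $\nu = \lfloor T/\tau\rfloor \tau \,\mathcal{L}^d_{\mathbb{T}^d}$, the problem reduces to estimating $\Ex\braq{\norm{\nabla u_\epsilon}_{L^2}^2}$, where $u_\epsilon$ solves $-\Delta u_\epsilon = P_\epsilon(\mu_{\tau,T} - \lfloor T/\tau\rfloor \tau)$. Plancherel's identity then converts this into a Fourier sum governed by $\Ex\braq{|\widehat{\mu_{\tau,T}}(\xi)|^2}$.

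The first technical step is a discrete analogue of Lemma~\ref{lem:second-moment-continuous}. Since
\[ \Ex\braq{|\widehat{\mu_{\tau,T}}(\xi)|^2} = \tau^2 \sum_{s,t=1}^{\lfloor T/\tau\rfloor} \exp\bra{-2\pi^2|\xi|^2 \tau^{2H}|t-s|^{2H}}, \]
comparing this Riemann-type sum with its continuous counterpart (via the change of variable $m = |t-s|$, as in the proof of Lemma~\ref{lem:second-moment-continuous}) should yield
\[ \Ex\braq{|\widehat{\mu_{\tau,T}}(\xi)|^2} \lesssim T \min\brag{\tau,\,|\xi|^{-1/H}}. \]
This interpolates between the continuous-time regime (when $|\xi|^{1/H}\tau \lesssim 1$, the sum mimics the integral, giving $\sim T|\xi|^{-1/H}$) and a diagonal-dominated regime (when $|\xi|^{1/H}\tau \gtrsim 1$ the Gaussian factor essentially forces $s = t$ and only the $\lfloor T/\tau\rfloor$ diagonal terms survive, yielding $\sim T\tau$).

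Next I would split the Fourier sum for $\Ex\braq{\norm{\nabla u_\epsilon}_{L^2}^2}$ at the threshold $|\xi|\sim \tau^{-H}$. The low-frequency part
\[ T\sum_{0<|\xi|\lesssim \tau^{-H}} \frac{\exp\bra{-\epsilon|\xi|^2}}{|\xi|^{2+1/H}} \]
is essentially the sum from Proposition~\ref{prop:upper-bound-p-1-torus} and recovers the continuous-time rate of Theorem~\ref{thm:main-Td}. The high-frequency part
\[ T\tau\sum_{|\xi|\gtrsim \tau^{-H}} \frac{\exp\bra{-\epsilon|\xi|^2}}{|\xi|^2} \]
is genuinely new: arguments in the spirit of Lemma~\ref{lem:g} show that, when $\sqrt{\epsilon}\lesssim \tau^H$, this behaves like $T\tau\,\epsilon^{-(d-2)/2}$ for $d>2$, with a logarithmic factor at $d=2$, while for $d<2$ it is dominated by its lower edge and has order $T\tau^{1+H(2-d)}$.

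Finally I would optimize $\epsilon$ in $W_1(\mu_{\tau,T},\lfloor T/\tau\rfloor\tau)\lesssim T\sqrt{\epsilon} + \norm{\nabla u_\epsilon}_{L^2}$. For $d\le 2$ the high-frequency contribution is already negligible and the choice \eqref{eq:choice-epsilon} delivers the continuous-time rate $T^{-1/2}$. For $d>2$, balancing $T\sqrt{\epsilon}$ against $\sqrt{T\tau}\,\epsilon^{-(d-2)/4}$ forces $\sqrt{\epsilon}\sim(\tau/T)^{1/d}$ and produces the extra discretization rate $T^{-(1+\alpha)/d}$; taking the minimum between this and the continuous-time rate yields the four cases in the statement. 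I expect the main obstacle to be the bookkeeping when $\sqrt{\epsilon}$ and $\tau^H$ are comparable, in particular at the critical dimension $d=1/H+2$ where logarithmic corrections appear in both the low- and high-frequency parts, and more generally checking that each subcase of the statement arises for the correct range of $\alpha$, $d$ and $H$.
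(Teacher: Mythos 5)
Your proposal follows the same route as the paper's proof: bound $W_1$ by $T\sqrt{\epsilon}+\norm{\nabla u_\epsilon}_{L^2}$, estimate $\Ex\braq{\norm{\nabla u_\epsilon}_{L^2}^2}$ via Plancherel and a discrete analogue of Lemma~\ref{lem:second-moment-continuous}, reduce to $\ell^p$-norms of $g$ from Lemma~\ref{lem:g}, and then optimize $\epsilon$ case by case in $(d,H,\alpha)$; your additional extra split of the Fourier sum at $\abs{\xi}\sim\tau^{-H}$ is an unnecessary (but harmless) refinement of the paper's cleaner decomposition into $T\norm{g}_{\ell^{2+1/H}}^{2+1/H}+T\tau\norm{g}_{\ell^2}^2$.

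One slip you should fix: the displayed second-moment bound should read $\Ex\braq{\abs{\widehat{\mu_{\tau,T}}(\xi)}^2}\lesssim T\bra{\abs{\xi}^{-1/H}+\tau}$ (equivalently $T\max\brag{\tau,\abs{\xi}^{-1/H}}$), not $T\min\brag{\tau,\abs{\xi}^{-1/H}}$. With $\min$ you would be claiming the correlation decays strictly faster than either the diagonal contribution $T\tau$ or the continuous-time contribution $T\abs{\xi}^{-1/H}$, which is false (the diagonal terms alone already give $\sim T\tau$). Your own verbal explanation of the two regimes, and the subsequent split into low and high frequencies, are consistent with the correct $\max$ (sum) form, so this is a typo in the formula rather than a conceptual error, but as written the displayed inequality is wrong and should be corrected.
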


The argument follows a similar path as in the previous section. 

\subsection{Fourier transform moment bounds} In this case, since we limit ourselves to the Wasserstein distance of order $1$, only bounds for second and fourth moments will be needed. The following result is a generalization of Lemma~\ref{lem:second-moment-continuous}.

\begin{lemma}\label{lem:second-moment-discrete}
For $T \ge \tau$  it holds, for every $\xi \in \Z^d$, $\xi\neq 0$,
\begin{equation}\label{eq:lower-bound-fourier-modes-discrete}
\Ex\braq{ \abs{ \widehat{\mu_{\tau,T}}(\xi)}^2 } \sim T  \bra{ |\xi|^{-1/H} + \tau}   %\begin{cases}  |\xi|^{-1/H} + \tau & \text{if $|\xi| \ge T^{-H}$,}\\
%T  + \tau |\xi|^2 T^{2H} & \text{otherwise.}
%\end{cases}
 \end{equation}
\end{lemma}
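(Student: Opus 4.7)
The plan is to compute the second moment explicitly using the Gaussian characteristic function and then sandwich the resulting discrete sum between Riemann integrals, mirroring Lemma~\ref{lem:second-moment-continuous} while keeping track of the additional diagonal contribution $\sim T\tau$ produced by the discretization.

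Setting $N := \lfloor T/\tau\rfloor$ and using that each coordinate of the increment $B^H_{t\tau} - B^H_{s\tau}$ is a centered Gaussian of variance $|t-s|^{2H}\tau^{2H}$, the Gaussian characteristic function yields
\[
\Ex\bra{ \abs{\widehat{\mu_{\tau,T}}(\xi)}^2 } = \tau^2 \sum_{s,t=1}^N \exp\bra{-2\pi^2 |\xi|^2 (|t-s|\tau)^{2H}} = \tau^2 N + 2\tau^2 \sum_{k=1}^{N-1} (N-k)\,e^{-2\pi^2 |\xi|^2 (k\tau)^{2H}}.
\]
The diagonal is of size $\tau^2 N \sim T\tau$, since $T\ge \tau$ forces $\tau N \sim T$, and this will account for the $\tau$ summand in \eqref{eq:lower-bound-fourier-modes-discrete}.

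For the upper bound on the off-diagonal part, I would estimate $N-k \le N$ and, since the summand $k\mapsto e^{-2\pi^2|\xi|^2(k\tau)^{2H}}$ is positive and decreasing, compare the sum with its integral; the changes of variables $s=t\tau$ and then $u=|\xi|^{1/H}s$ give
\[
2\tau^2 N \sum_{k\ge 1} e^{-2\pi^2 |\xi|^2 (k\tau)^{2H}} \le 2\tau T \int_0^\infty e^{-2\pi^2 |\xi|^2 (t\tau)^{2H}}\,\d t \lesssim \frac{T}{|\xi|^{1/H}}.
\]
Together with the diagonal, this yields the $\lesssim$ half of \eqref{eq:lower-bound-fourier-modes-discrete}.

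For the matching lower bound I would split on the size of $|\xi|^{1/H}\tau$. If $|\xi|^{1/H}\tau \ge 1/2$ then $|\xi|^{-1/H}+\tau \sim \tau$ and the diagonal $\tau^2 N \sim T\tau$ is already of the required order. Otherwise, I restrict the off-diagonal sum to $k\le N/2$ (so that $N-k \ge N/2$) and bound it below by $\int_{1}^{N/2} e^{-2\pi^2|\xi|^2(t\tau)^{2H}}\,\d t$; the same changes of variables turn this into $|\xi|^{-1/H}\int_{|\xi|^{1/H}\tau}^{|\xi|^{1/H} N\tau/2} e^{-2\pi^2 u^{2H}}\,\d u$, whose range contains a fixed interval such as $[1/2,1]$ once $T$ is larger than an absolute constant depending only on $H$, so that the integral is bounded below by a positive constant; this produces $\gtrsim T/|\xi|^{1/H}$. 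The only subtle point is this case distinction, which is precisely the mechanism making the two summands $|\xi|^{-1/H}$ and $\tau$ alternately dominate in \eqref{eq:lower-bound-fourier-modes-discrete}: the diagonal takes over exactly when the off-diagonal integral stops providing a useful lower bound.
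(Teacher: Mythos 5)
Your proof is correct and follows essentially the same route as the paper: reduce to a deterministic double sum via the Gaussian characteristic function, then compare the resulting one-variable sums with integrals to extract both the $|\xi|^{-1/H}$ contribution and the $O(\tau)$ discretization contribution. The paper packages this slightly differently — it controls the one-sided sum $\sum_{s=0}^{m-1}\exp(-|\xi|^2 s^{2H}\tau^{2H})\tau$ directly via a sum–integral error bound, rather than splitting diagonal from off-diagonal and doing a case distinction on $|\xi|^{1/H}\tau$ — but the mechanism and the estimates are the same.
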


\begin{proof}
We use the inequality, valid for any absolutely continuous decreasing function $f$,
\[  \abs{ \sum_{s=0}^{m-1} f(s \tau )  \tau - \int_0^{m \tau }f(t) \,\d t  } \le  \tau \int_0^{m \tau} |f'(t)| \,\d t \le \tau \bra{ f(0)-f(m \tau)} \]
that for $f(x) = \exp\bra{ - |\xi|^2 x^{2H}}$ yields
\[ \abs{ \sum_{s=0}^{m-1}  \exp\bra{ -  |\xi|^2 s^{2H} \tau^{2H} } \tau - \int_0^{m \tau} \exp\bra{ -  |\xi|^2 t^{2H} } \,\d t } \le \tau \bra{ 1 - \exp\bra{ - |\xi|^2 (m\tau)^{2H}}}.\]
We rewrite
$$ \int_0^{m \tau} \exp\bra{ -  |\xi|^2 t^{2H} } \,\d t  = |\xi|^{-1/H} \int_0^{m\tau |\xi|^{1/H}} \exp\bra{ - t^{2H}} \,\d t $$
In our application we have $m\tau \ge 1$, hence 
$$ \int_0^{m\tau |\xi|^{1/H}} \exp\bra{ - t^{2H}} \,\d t \sim 1 \quad \text{and} \quad   \bra{ 1 - \exp\bra{ - |\xi|^2 (m\tau)^{2H}}} \sim 1,$$
so that
$$ \sum_{s=0}^{m-1}  \exp\bra{ -  |\xi|^2 s^{2H} \tau^{2H} } \tau  \sim |\xi|^{-1/H} + \tau.$$
%Otherwise, if $m\tau |\xi|^{1/H} < 1$, then
%$$ \int_0^{m\tau |\xi|^{1/H}} \exp\bra{ - t^{2H}} \d t \sim m\tau |\xi|^{1/H} \quad \text{and} \quad \bra{ 1 - \exp\bra{ - |\xi|^2 (m\tau)^{2H}}} \sim |\xi|^2 (m\tau)^{2H},$$
%so that
%$$ \sum_{s=0}^{m-1}  \exp\bra{ -  |\xi|^2 s^{2H} \tau^{2H} } \tau  \sim m\tau  + \tau |\xi|^2 (m\tau)^{2H}.$$

Let then $n= \lfloor T/\tau \rfloor$ and write
\[ \begin{split}
 \Ex\braq{ \abs{  \widehat{\mu_{\tau,T}}(\xi) }^2 } & = \sum_{s,t=1}^{n} \Ex\braq{ \exp\bra{ 2 \pi  i \xi \cdot (B_{t\tau} - B_{s \tau})}}  \tau^2  \\
 & = \sum_{s,t=1}^{n}  \exp\bra{ -  2\pi^2|\xi|^2|t-s|^{2H} \tau^{2H}} \tau^2 \\
& \le 2 n \tau \sum_{s=0}^{n-1} \exp\bra{ -  2\pi^2| \xi|^2 s^{2H} \tau^{2H}} \tau \\
& \lesssim T  \bra{  |\xi|^{-1/H} + \tau }
\end{split}\]
For the lower bound, we write instead
\[ \begin{split}
\Ex\braq{ \abs{  \widehat{\mu_{\tau,T}}(\xi) }^2 } & =  \sum_{s,t=1}^{n}  \exp\bra{ -  2\pi^2| \xi|^2|t-s|^{2H} \tau^{2H}}  \tau^2   \\
& \ge \sum_{t=\lfloor n/2 \rfloor}^n \sum_{s=t}^{n} \exp\bra{ -  2\pi^2| \xi|^2|t-s|^{2H} \tau^{2H}}  \tau^2\\
& \ge \lfloor n/2 \rfloor \tau \sum_{s=0}^{\lfloor n/2 \rfloor} \exp\bra{ -  2\pi^2| \xi|^2 s^{2H} \tau^{2H}}  \tau,
\end{split}\]
from which we argue similarly as in the upper bound, and obtain the thesis.
% & \ge   \int_0^{T/2} \int_0^{T -s} \exp\bra{-|\xi|^2 t^{2H}} \d t \d s  \ge \int_0^{T/2} \int_0^{T/2} \exp\bra{-|\xi|^2 t^{2H}} \d t \d s\\
% &  \ge \frac{T}{2 |\xi|^{1/H}} \int_0^{T |\xi|^{1/H}/2} \exp\bra{-t^{2H}} \d t \\
% & \ge \frac{T}{2 |\xi|^{1/H}} \int_0^{1/2} \exp\bra{-t^{2H}} \d t,
% \end{split}\]
% where the last inequality holds if $T \ge 1$, so that $T |\xi|^{1/H}/2 > 1/2$.
\end{proof}

\begin{lemma}\label{lem:upper-bound-p-discrete}
%Let $d\ge 1$, $H \in (0,1)$ and $B = B^H$ be a fractional Brownian motion with Hurst index $H$, with values in $\T^d$, and empirical measure $\mu_T$, for $T \ge 0$. 
For every $p \in \mathbb{N}$, %there exists a constant $C= C(p,d,H)>0$ such that,
for every $T \ge \tau$ and $\xi \in (\Z^d)^p$,
\begin{equation}\label{eq:upper-bound-fourier-modes-discrete}
\abs{ \Ex\braq{ \prod_{j=1}^p\widehat{\mu_{\tau, T}}(\xi_j) }} \lesssim  \sum_{\sigma \in \mathcal{S}_p} \prod_{j=1}^{p} \min\brag{ \frac{1}{| \sum_{i=1}^j \xi_{\sigma(i)} |^{1/H}} + \tau , T}.
\end{equation} 
\end{lemma}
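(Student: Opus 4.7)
The strategy mirrors that of Lemma~\ref{lem:upper-bound-p}, replacing time integrals with Riemann sums over the grid $\tau \Z \cap [0,T]$. Write $n = \lfloor T/\tau \rfloor$. By definition of $\mu_{\tau,T}$,
\[
\prod_{j=1}^{p} \widehat{\mu_{\tau,T}}(\xi_j) = \tau^{p} \sum_{(t_1,\ldots,t_p) \in \brag{1,\ldots,n}^p} \exp\bra{2\pi i \sum_{j=1}^p \xi_j \cdot B^H_{t_j \tau}}.
\]
Split the discrete cube $\brag{1,\ldots,n}^p$ according to permutations $\sigma \in \mathcal{S}_p$, with the simplex $\Delta_\sigma^\tau := \brag{0 = t_{\sigma(0)} \le t_{\sigma(1)} \le \ldots \le t_{\sigma(p)} \le n}$ (inserting $t_{\sigma(0)} := 0$ using $B^H_0 = 0$). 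As in the continuous setting, by symmetry of the final bound under relabeling we only need to treat $\sigma = \mathrm{id}$; the general case follows by relabelling the $\xi_i$.

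Next I perform summation by parts:
\[
\sum_{j=1}^p \xi_j \cdot B^H_{t_j \tau} = \sum_{j=1}^p \bra{ B^H_{t_j \tau} - B^H_{t_{j-1}\tau}} \cdot \sum_{i=j}^p \xi_i,
\]
and use that the increments $(B^H_{t_j\tau}-B^H_{t_{j-1}\tau})_{j=1}^p$ are jointly Gaussian with covariance bounded below, in the sense of quadratic forms, by $C\, \operatorname{diag}(|t_j-t_{j-1}|^{2H}\tau^{2H})$, by the local non-determinism property \eqref{eq:local-non-determinism}. Taking expectation and evaluating the characteristic function of a Gaussian then gives
\[
\abs{\Ex\braq{\exp\bra{2\pi i \sum_{j=1}^p \xi_j \cdot B^H_{t_j\tau}}}} \le \exp\bra{-\frac{C}{2} \sum_{j=1}^{p} \abs{\sum_{i=j}^p \xi_i}^2 |t_j - t_{j-1}|^{2H} \tau^{2H}}.
\]

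Performing the discrete change of variables $s_j = t_j - t_{j-1} \in \brag{0,1,\ldots,n}$ and bounding $\Delta_{\mathrm{id}}^\tau$ by the product $\brag{0,\ldots,n}^p$, the contribution of $\sigma = \mathrm{id}$ is at most
\[
\prod_{j=1}^p \tau \sum_{s_j=0}^{n} \exp\bra{-\frac{C}{2} \abs{\sum_{i=j}^p \xi_i}^2 s_j^{2H}\tau^{2H}}.
\]
Therefore the whole argument reduces to the one-dimensional bound
\begin{equation}\label{eq:one-dim-discrete-bound}
\tau \sum_{s=0}^{n} \exp\bra{-\frac{C}{2} |\eta|^2 s^{2H}\tau^{2H}} \lesssim \min\brag{|\eta|^{-1/H} + \tau, T},
\end{equation}
valid for every $\eta \in \Z^d$, which I would establish exactly as in the proof of Lemma~\ref{lem:second-moment-discrete}: the $s=0$ term contributes $\tau$; the remaining sum is compared to the integral $\int_0^{n\tau} \exp(-C|\eta|^2 t^{2H}/2)\,\d t \lesssim |\eta|^{-1/H}$ (with the total trivially dominated by $n\tau \le T$).

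The main obstacle is the bookkeeping around the extra $+\tau$ in \eqref{eq:one-dim-discrete-bound}, which has no analogue in the continuous case and originates solely from the single term $s=0$ in the discrete sum; once this is carefully tracked, applying \eqref{eq:one-dim-discrete-bound} to $\eta = \sum_{i=j}^p \xi_i$ (equivalently $\sum_{i=1}^{j-1}\xi_{\sigma(i)}$ up to a sign after the relabeling step) yields the claimed bound \eqref{eq:upper-bound-fourier-modes-discrete}, summing over all permutations $\sigma$.
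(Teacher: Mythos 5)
Your proposal is correct and follows essentially the same route as the paper's proof: split the sum over $\{1,\ldots,n\}^p$ into $p!$ discrete simplexes, perform a summation by parts using $B_0^H=0$, invoke the local non-determinism bound \eqref{eq:local-non-determinism}, change variables $s_j=t_j-t_{j-1}$, and reduce to the one-dimensional estimate $\tau\sum_{s\ge 0}\exp(-c|\eta|^2 s^{2H}\tau^{2H})\lesssim\min\{|\eta|^{-1/H}+\tau,T\}$. You are in fact a bit more explicit than the paper about where the extra $+\tau$ comes from (the $s=0$ term in the discrete sum, which the paper suppresses), and you correctly retain the $\tau^{2H}$ factor in the exponent that the paper drops in its last display; one tiny cosmetic point is that the passage from the tail sums $\sum_{i=j}^p\xi_{\sigma(i)}$ to the head sums $\sum_{i=1}^j\xi_{\sigma(i)}$ in the statement is by reversing the permutation (not ``up to a sign'', which would presuppose $\sum_i\xi_i=0$), but this does not affect the result since the right-hand side is a sum over all $\sigma\in\mathcal{S}_p$.
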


%Notice that the first factor $T$ in the right hand side could be also implicitly accounted by extending the range of the product up to $p$ in the right hand side. Moreover, because of the definition of fractional Brownian motion on the torus, the same bound applies also in that case (provided that $\xi_i \in \Z^d$).

\begin{proof}
Let $n= \floor{T/\tau}$ and write by definition,
\[ \widehat{\mu_{\tau,T}}(\xi_j) = \sum_{t_1,\dots,t_p=1}^n \exp\bra{ 2 \pi i \xi B_{t_j\tau}} \tau \]
so that
\[ \prod_{i=1}^p \widehat{\mu_T}(\xi_i) = \sum_{j=1}^n \exp\bra{ 2 \pi i \sum_{j=1}^p \xi_j B_{t_j \tau}} \tau^p .\]
As in the proof of Lemma~\ref{lem:upper-bound-p} we split the summation into $p!$ simplexes, one for every $\sigma \in \mathcal{S}_p$,
\[ \Delta_\sigma := \brag{ 1 \le t_{\sigma(1)} \le \ldots  \le t_{\sigma(p)} \le n}.\]
We now argue only in the case $\sigma$ being the identity permutation, the other cases being analogous. A summation by parts gives %using also the condition $\sum_{i=1}^p \xi_i = 0$, gives
\[ \sum_{j=1}^p \xi_j B_{t_j \tau} =   B_{0} \sum_{i=1}^p \xi_i + \sum_{j=1}^{p}  (B_{t_j \tau}- B_{t_{j-1}\tau}) \sum_{i=j}^{p} \xi_i = \sum_{j=1}^{p}  (B_{t_j\tau}- B_{t_{j-1}\tau}) \sum_{i=j}^{p} \xi_i,\]
where we let $t_0 = 0$ and we assume that $B_0= 0$. Using this identity, the Fourier transform (characteristic function) of a Gaussian random variable and \eqref{eq:local-non-determinism}, it follows that
\[ \Ex\braq{ \exp\bra{ i \sum_{j=1}^p \xi_j B_{t_j\tau}}} \le \exp\bra{- \frac C 2 \sum_{j=1}^{p} \abs{ \sum_{i=j}^{p} \xi_i  }^2 |t_{j}- t_{j-1}|^{2H}\tau^{2H}}.\]
We then bound from above the sum
\[ \begin{split}  \sum_{\Delta_\sigma} \Ex\braq{ \exp\bra{ i \sum_{j=1}^p \xi_j B_{t_j\tau}}} \tau^p
 & \le  \sum_{\Delta_\sigma}  \exp\bra{- \frac C 2 \sum_{j=1}^{p} \abs{ \sum_{i=j}^{p} \xi_i  }^2 |t_{j}- t_{j-1}|^{2H}\tau^{2H}} \tau^p \\
& \le \sum_{t_1,\dots,t_p=1}^n \exp\bra{- \frac C 2 \sum_{j=1}^{p} \abs{ \sum_{i=j}^{p} \xi_i  }^2 s_j^{2H}\tau^{2H}} \tau^p,
\end{split}\]
where we performed the change of variables $s_j= t_j- t_{j-1}$, for $j \ge 1$, recalling that $t_0=0$. To conclude, we split into a product of $p$ sums that we bound separately
\[ \sum_{s_j=1}^n \exp\bra{- \frac C 2  \abs{ \sum_{i=j}^{p} \xi_i  }^2 s_j^{2H}}  \tau \lesssim \min\brag{  \abs{ \sum_{i=j}^{p} \xi_i}^{-1/H}+\tau, T}.\]
\end{proof}

\subsection{Upper bound}

\begin{proposition}\label{prop:upper-bound-p-1-torus-discrete}
Define $\epsilon = \epsilon(d,H,\alpha, T)>0$ as follows:
\begin{equation}\label{eq:choice-epsilon-discrete} \sqrt{\epsilon} = \begin{cases}
T^{-1/2} & \text{if $d \le 2$,}\\
T^{-1/2} & \text{if $2<d<2+\frac1H$ and $\alpha > d/2 -1$,}\\
T^{-\frac{1+\alpha}{d}} & \text{if $2<d<2+\frac1H$ and $\alpha \le d/2 -1$,}\\
T^{-\frac{1+\alpha}{d}} & \text{if $d=2+\frac1H$ and $\alpha < \frac1{2H}$,} \\
\sqrt{{\log T}/T} & \text{if $d=2+\frac1H$ and $\alpha \ge \frac1{2H}$,}\\
T^{-\frac{1+\alpha}{d}} & \text{if $d>2+\frac1H$ and $\alpha \le \frac{1/H}{d-1/H}$,}\\
T^{-\frac{1}{d-1/H}} & \text{if $d>2+\frac1H$ and $\alpha > \frac{1/H}{d-1/H}$,}
\end{cases}
\end{equation}
and let $u_\epsilon$ be a solution to the PDE
\[
- \Delta u_\epsilon = P_\epsilon (\mu_{\tau, T} - \floor{T/\tau}\tau).
\]
Then, as $ T \to \infty$,
\begin{equation}\label{eq:asymptotics-L2-gradient-discrete}
\Ex\braq{\norm{ \nabla u_\epsilon }_{L^2}} \lesssim  T \sqrt{\epsilon},
\end{equation}
hence, by \eqref{eq:upper-p},
\[ \Ex\braq{W_1(\mu_{\tau, T}, \floor{T/\tau}\tau) } \lesssim T \sqrt{\epsilon}. \]
\end{proposition}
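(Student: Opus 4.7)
The plan is to follow the same blueprint as the proof of Proposition~\ref{prop:upper-bound-p-1-torus} in the continuous-time setting. First, I apply the transport-PDE bound \eqref{eq:upper-p} from Lemma~\ref{lem:general-transport-pde-bound} to the pair $(\mu_{\tau,T}, \lfloor T/\tau \rfloor \tau\, \mathcal{L}^d_{\T^d})$, reducing the problem to showing that, for the optimal choice of $\epsilon$,
\[
\Ex\braq{\norm{\nabla u_\epsilon}_{L^2}} \le \Ex\braq{\norm{\nabla u_\epsilon}_{L^2}^2}^{1/2} \lesssim T\sqrt{\epsilon}.
\]
Since $\mu_{\tau,T}(\T^d) = \lfloor T/\tau\rfloor \tau \le T$, combining this with the $T\sqrt{\epsilon}$ term coming from the heat smoothing in \eqref{eq:upper-p} gives the desired bound.

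Next, by Plancherel's identity \eqref{eq:plancherelidentity} together with \eqref{eq:gradient-fourier} and \eqref{eq:fourier-heat}, I expand
\[
\Ex\braq{\norm{\nabla u_\epsilon}_{L^2}^2} = \frac{1}{4\pi^2}\sum_{\xi \in \Z^d\setminus\{0\}} \frac{\exp(-\epsilon|\xi|^2)}{|\xi|^2}\, \Ex\braq{|\widehat{\mu_{\tau,T}}(\xi)|^2}.
\]
Applying Lemma~\ref{lem:second-moment-discrete}, $\Ex[|\widehat{\mu_{\tau,T}}(\xi)|^2] \sim T(|\xi|^{-1/H} + \tau)$, the sum splits into two contributions,
\[
\Ex\braq{\norm{\nabla u_\epsilon}_{L^2}^2} \lesssim T \sum_{\xi \ne 0} \frac{\exp(-\epsilon|\xi|^2)}{|\xi|^{2+1/H}} + T\tau \sum_{\xi \ne 0} \frac{\exp(-\epsilon|\xi|^2)}{|\xi|^2}.
\]
The first sum is exactly the one appearing in Proposition~\ref{prop:upper-bound-p-1-torus} and is controlled via Lemma~\ref{lem:g} with $p = 2+1/H$. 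The second sum is controlled via the same lemma with $p = 2$, giving $\sim 1$ if $d < 2$, $\sim |\log\epsilon|$ if $d = 2$, and $\sim \epsilon^{-(d-2)/2}$ if $d > 2$. Substituting $\tau \sim T^{-\alpha}$ yields an additional contribution of order $T^{1-\alpha}$ (up to log factors or powers of $\epsilon$).

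Finally, the task is to choose $\epsilon$ so that both terms above are $\lesssim T^2\epsilon$. When the discrete correction term $T\tau \cdot \epsilon^{-(d-2)/2}$ is dominated by the continuous one, the optimal $\epsilon$ coincides with \eqref{eq:choice-epsilon}; otherwise, balancing $T^{1-\alpha}\epsilon^{-(d-2)/2} \sim T^2\epsilon$ produces the new threshold $\sqrt{\epsilon} \sim T^{-(1+\alpha)/d}$. The various regimes in \eqref{eq:choice-epsilon-discrete} correspond precisely to determining, for given $d$, $H$, $\alpha$, which of these two choices yields the smaller value of $T\sqrt{\epsilon}$: the continuous rate dominates whenever $\tau$ is sufficiently small (i.e., $\alpha$ large enough), while otherwise the discretization rate takes over.

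The main obstacle will be the case analysis in the final step: carefully identifying the threshold values of $\alpha$ at which the discrete contribution $T\tau \cdot \epsilon^{-(d-2)/2}$ overtakes the continuous contribution, and checking in each of the seven regimes of \eqref{eq:choice-epsilon-discrete} that the chosen $\epsilon$ is simultaneously large enough to absorb the heat-smoothing error $T\sqrt{\epsilon}$ in \eqref{eq:upper-p} and small enough to make both Fourier sums fit within the budget $(T\sqrt\epsilon)^2$. The logarithmic factor at the critical dimension $d = 2+1/H$ with $\alpha \ge 1/(2H)$ needs particular care, as it forces the choice $\sqrt{\epsilon} \sim \sqrt{(\log T)/T}$ as in the continuous case.
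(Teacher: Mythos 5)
Your proposal follows essentially the same route as the paper's proof: Plancherel plus the Fourier representation of $\nabla u_\epsilon$, then Lemma~\ref{lem:second-moment-discrete} to split $\Ex\bigl[\|\nabla u_\epsilon\|_{L^2}^2\bigr]$ into the continuous contribution $T\|g\|_{\ell^{2+1/H}}^{2+1/H}$ and the discretization term $T^{1-\alpha}\|g\|_{\ell^2}^2$, and finally Lemma~\ref{lem:g} together with a case-by-case comparison of $T^{-1/2}$ (or the continuous rate) against $T^{-(1+\alpha)/d}$. Your identification of the new balancing threshold $\sqrt\epsilon\sim T^{-(1+\alpha)/d}$ from $T^{1-\alpha}\epsilon^{-(d-2)/2}\sim T^2\epsilon$ is exactly what drives the seven regimes in~\eqref{eq:choice-epsilon-discrete}; what remains is the straightforward (if tedious) verification that in each regime the chosen $\epsilon$ makes both Fourier sums fit within the budget $(T\sqrt\epsilon)^2$, which you correctly flag as the residual work.
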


\begin{proof}
{\sc Step 1}. We argue that 
\begin{equation}\label{eq:step1-second-moment-discrete}
\Ex\braq{\norm{ \nabla u_{\epsilon} }_{L^2}^2} \sim T \norm{ g }_{\ell^{2 +1/H}(\Z^d)}^{2+1/H} + T^{1-\alpha} \norm{ g }_{\ell^{2}(\Z^d)}^{2},
\end{equation}
where the first $g$ in the sum above is as in Lemma~\ref{lem:g}  but with $\epsilon/(2+1/H)$ instead of $\epsilon$. Indeed, by Plancherel's identity (see \eqref{eq:plancherelidentity}) and Lemma~\ref{lem:second-moment-discrete}, we may estimate the second moment of $\nabla u_\epsilon$ by
\[
\begin{split} \Ex\braq{\norm{ \nabla u_{\epsilon} }_{L^2}^2} & = (2 \pi)^{-2} \sum_{\xi \in \Z^d \setminus\brag{0}} \Ex\braq{\abs{\widehat{\mu_{\tau, T}}(\xi)}^2} \frac{\exp\bra{-\epsilon|\xi|^2/2 }}{|k|^2}\\
& \sim   T  \sum_{\xi \in \Z^d \setminus\brag{0}} \frac{\exp\bra{-\epsilon|\xi|^2/2}}{|\xi|^{2}} \bra{ |\xi|^{-1/H} + \tau }.\\
& \sim T \norm{ g }_{\ell^{2 +1/H}(\Z^d)}^{2+1/H} + T^{1-\alpha} \norm{ g }_{\ell^{2}(\Z^d)}^{2}.
\end{split}
\]

{\sc Step 2} Case study. We now argue by Lemma \ref{lem:g} to show that \eqref{eq:asymptotics-L2-gradient-discrete} holds with $\epsilon$ chosen as in \eqref{eq:choice-epsilon-discrete}. 

\medskip
{\sc Case $d<2$}. By Lemma \ref{lem:g} the right hand side of \eqref{eq:step1-second-moment-discrete} is $\sim T$ and thus \eqref{eq:asymptotics-L2-gradient-discrete}. If $d=2$, by Lemma \ref{lem:g} the first term on the right hand side of \eqref{eq:step1-second-moment-discrete} is $\sim T$, while the second term is $\sim T^{1-\alpha} | \log \epsilon |$. Hence, by choosing $\epsilon = T^{-1}$ as in \eqref{eq:choice-epsilon-discrete}, \eqref{eq:asymptotics-L2-gradient-discrete} holds.  

\medskip
{\sc Case $2< d < 2+  1 /H$}. By Lemma \ref{lem:g} the right hand side of \eqref{eq:step1-second-moment-discrete} is $\sim T + T^{1-\alpha} \epsilon^{-\frac 1 2 (d-2) }$, which in turns imply \eqref{eq:asymptotics-L2-gradient-discrete}. Indeed, the condition $\alpha \le d/2-1$ is equivalent to 
$$ T^2 \varepsilon = T^{1-\alpha} \epsilon^{-\frac 1 2 (d-2) },$$
which implies \eqref{eq:asymptotics-L2-gradient-discrete}, otherwise the first term on the right hand side of \eqref{eq:step1-second-moment-discrete} is the leading one and \eqref{eq:asymptotics-L2-gradient-discrete} is still satisfied choosing $\sqrt{\epsilon} = T^{-1/2}$.

\medskip
{\sc Case $d = 2 + 1/H$}. By Lemma \ref{lem:g} the right hand side of \eqref{eq:step1-second-moment-discrete} is $\sim T | \log T| + T^{1-\alpha} \epsilon^{-\frac12 (d-2)}$. Note that if $\alpha < 1/2H$ by the choice $\sqrt{\epsilon} = T^{-(1+\alpha)/d}$ the second term of \eqref{eq:step1-second-moment-discrete} is the leading term. Otherwise we may choose $\sqrt{\epsilon} = \sqrt{\log T/T}$ and \eqref{eq:asymptotics-L2-gradient-discrete} holds. 

\medskip
{\sc Case $d > 2 + 1/H$}. Again by Lemma \ref{lem:g} the right hand side of \eqref{eq:step1-second-moment-discrete} is $\sim T \epsilon^{-\frac12 (d-2-1/H)}$ $+ T^{1-\alpha} \epsilon^{-\frac12 (d-2)}$. If $\alpha \leq (1/H)/(d-1/H)$ we may choose $\sqrt{\epsilon} = T^{-(1+\alpha)/d}$ so that the second term in \eqref{eq:step1-second-moment-discrete} is the leading one and \eqref{eq:asymptotics-L2-gradient-discrete} holds. Otherwise we might choose $\sqrt{\epsilon} = T^{-1/(d-1/H)}$ so that the first term of \eqref{eq:step1-second-moment-discrete} is the leading one and \eqref{eq:asymptotics-L2-gradient-discrete} holds. 
\end{proof}

\bibliographystyle{plain}
\bibliography{biblio.bib}

\end{document}